\newtheorem{theorem}{Theorem}[section]
\newtheorem{lemma}{Lemma}[section]
\newtheorem{corollary}{Corollary}[section]
\newtheorem{proposition}{Proposition}[section]
\numberwithin{equation}{section} 
\numberwithin{equation}{section}
\title{On a nonlocal superconductivity problem}
\author[D.J.  Araújo]{Damião J. Araújo}
\address{UFPB, Department of Mathematics, Universidade Federal da Para\'iba, 58059-900, Jo\~ao Pessoa-PB, Brazil}{}
\email{araujo@mat.ufpb.br}
\author[A. Sobral]{Aelson Sobral}
\address{Applied Mathematics and Computational Sciences (AMCS), Computer, Electrical and Mathematical Sciences and Engineering Division (CEMSE), King Abdullah University of Science and Technology (KAUST), Thuwal, 23955-6900, Kingdom of Saudi Arabia}{}
\email{aelson.sobral@kaust.edu.sa}
\begin{document}

\subjclass[2020]{35B65, 35R11, 35J70, 35R09} 




\keywords{Regularity estimates, Degenerate elliptic equations, Nonlocal equations}
  
\begin{abstract} 
This paper investigates degenerate nonlocal free boundary problems arising in the context of superconductivity, extending the nonlocal counterpart to the work of Caffarelli, Salazar, and Shahgholian \cite{CS02, CSS04} in the local setting. In these models, no partial differential equation governs the moving sets where the gradient vanishes, meaning that test functions are only required to have a nonzero gradient. Our main results provide interior gradient H\"older regularity estimates for viscosity solutions.  
\end{abstract}
  
\date{\today}

\maketitle
 
\tableofcontents

\section{Introduction}\label{sct intro}

In this paper, we investigate regularity estimates of solutions to nonlocal free boundary problems, which emerge in the study of nonlocal superconductivity and degenerate diffusion problems. 

The significance of these models lies in their application to finance, particularly in scenarios involving jump processes. In this case, diffusion occurs at points where the cost function is not maximized, meaning that diffusion can only be inferred at non-critical points. Secondly, within the context of superconductivity, the problem serves as a nonlocal variant of the stationary equation in the mean-field model for superconducting vortices, see \cite{BR99, C95} and \cite{ESS}.

Local versions of this problem were explored by Caffarelli, Salazar, and Shahgholian in \cite{CS02, CSS04}, where they investigate fully nonlinear elliptic PDEs of the form
\begin{equation}\label{fbp2024}
F(x,D^2 u) = g(x,u) \quad \text{in } B_1 \cap \Omega,
\end{equation}
for $\Omega = \{|Du| \not = 0 \}$. See also \cite{FS}, for a broader class of free boundary problems. 

Building on the framework of the aforementioned works, we formulate the problem studied in this paper. Given a smooth function \( f \), we consider an unknown pair \( (u, \Omega) \), where \( u \) is a function defined in \( \mathbb{R}^n \) and \( \Omega \subset \mathbb{R}^n \) is an open set, satisfying in the viscosity sense
\begin{equation}\label{introprob}
\left\{
\begin{array}{rcl}
(-\Delta)^s u = f & \text{in} & B_1 \cap \Omega, \\
|Du| = 0 & \text{in} & B_1 \setminus \Omega.
\end{array}
\right.
\end{equation}  
Here, \( (-\Delta)^s \) denotes the fractional Laplacian (see Section \ref{prelim-section} for details). Problem \eqref{introprob} constitutes a genuine free boundary problem, where the nonlocal diffusion properties break down near \( \partial \Omega \), while the complementary set retains a local character, marked by the presence of critical points.

\subsection{Challenges for the nonlocal setting} The nonlocal setting involves specific features that do not arise in the local framework, even in the simplest case $f=0$, as critical points have a stronger influence on the system due to the nonlocal nature of the problem. 

In the local setting, the breakthrough work of Imbert and Silvestre \cite{IS13} shows that functions which are harmonic on the set of non-critical points are, in fact, harmonic throughout the entire domain. A slightly more technical statement is as follows: given an open set $\Omega \subset \mathbb{R}^n$, such that $\{ |Du| \neq 0 \} \subseteq \Omega$, the equation
\begin{equation}\label{Oharmonic}
\Delta u = 0 \quad \mbox{in} \quad \Omega \cap B_1,    
\end{equation}
holds, if and only if, $u$ satisfies $\Delta u = 0$ in $B_1$.

However, in the nonlocal scenario, significant challenges arise, and such equivalence generally fails to hold. In fact, for the homogeneous problem
\begin{equation}\label{mainprob}  
\left\{  
\begin{array}{rcl}  
(-\Delta)^s u = 0 & \text{in} & B_1 \cap \Omega, \\  
|Du| = 0 & \text{in} & B_1 \setminus \Omega,  
\end{array}  
\right.  
\end{equation}  
one might be tempted to infer that solutions of \eqref{mainprob} are $s$-harmonic in $B_1$; however, this is not true in general. Indeed, explicit counterexamples show that such an equivalence fails in the nonlocal setting, see, for instance, \cite{APT, PT}. The reverse implication is trivial in both cases. In light of this, we observe that problem \eqref{introprob} remains significant even in the homogeneous case, further motivating our investigation of the regularity estimates of solutions in the nonlocal setting.

One of the key components of the approach adopted by Caffarelli and Salazar in the local case \cite{CS02} is that solutions to problem \eqref{fbp2024} actually satisfy a uniformly elliptic PDE with a bounded right-hand side throughout the entire domain. However, in a nonlocal setting, such a reduction is generally not available. The nature of the fractional Laplacian, requires integration throughout the space $\mathbb{R}^n$, and thus the behavior of the solution outside the domain $B_1$ can heavily influence, as solutions are only $s$-integrable in $\mathbb{R}^n \setminus B_1$, any attempt to extend the PDE globally in $B_1$ would introduce irregular or singular terms.

\subsection{Main results and consequences}
Our main goal is to develop a De Giorgi-type gradient oscillation method tailored to the nonlocal setting, drawing inspiration from elliptic degeneracy scenarios as studied in \cite{IJS, JS}.

We shall consider solutions $(u, \Omega)$ of problem \eqref{mainprob}, using this problem as our primary prototype throughout the paper -- further discussions for broader settings, shall be discussed in Section \ref{extensions-sct}. 

Here is our main result. 

\begin{theorem}\label{main theorem}
For $u \in C(B_1) \cap L^\infty(\mathbb{R}^n)$ and $\Omega \subset \mathbb{R}^n$ an open set, assume that $(u,\Omega)$ solves \eqref{mainprob}, for some $s\in (1/2,1)$. Then, $u$ is locally $C^{1,\alpha}$, for some universal $\alpha\in (0,1)$, depending only on $n$ and $s$. Furthermore, there exists $C$ depending on $n$ and $s$, such that
$$
    \|u\|_{C^{1,\alpha}(B_{1/2})} \leq C\,\|u\|_{L^\infty(\mathbb{R}^n)}.
$$   
\end{theorem}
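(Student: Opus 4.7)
The strategy is a De Giorgi-type iterated oscillation decay of the gradient, reducing the $C^{1,\alpha}$ estimate to an \emph{improvement-of-flatness} lemma at geometric scales, in the spirit of the nonlocal adaptations carried out in \cite{IJS, JS}. The cornerstone observation is that problem \eqref{mainprob} is compatible with subtraction of affine functions in the regime $s > 1/2$: for any affine $\ell(x) = p \cdot x + c$ the principal-value integral $(-\Delta)^s \ell$ vanishes identically, as the linear part is odd and absolute convergence at infinity is guaranteed precisely by $s > 1/2$. Moreover, if $|p| > 0$, any smooth $\varphi$ touching $u - \ell$ from above at an interior maximum $x_0$ can be chosen with $D\varphi(x_0) = 0$, so $\varphi + \ell$ touches $u$ from above with gradient $p \neq 0$; this test is admissible for the free boundary formulation, the equation applies, and $(-\Delta)^s(u - \ell)(x_0) \leq 0$ in the viscosity sense --- so $u - \ell$ enjoys the fractional maximum principle on $B_1$ \emph{for free}.

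The main lemma to prove is a flatness improvement: there exist universal $\rho \in (0, 1/4)$, $\alpha \in (0, 2s-1)$, and $\varepsilon_0 > 0$ such that whenever $(u, \Omega)$ solves \eqref{mainprob} with $\|u\|_{L^\infty(\mathbb{R}^n)} \leq 1$ and $\|u - \ell\|_{L^\infty(B_1)} \leq \varepsilon_0$ for some affine $\ell$, there is an affine $\tilde\ell$ with $|D\tilde\ell - D\ell| \leq C \varepsilon_0$ and $\|u - \tilde\ell\|_{L^\infty(B_\rho)} \leq \rho^{1+\alpha}\varepsilon_0$. I would prove this by contradiction and compactness: normalize a hypothetical counterexample sequence to $v_k = (u_k - \ell_k)/\varepsilon_k$, apply the observation above (with $p = D\ell_k$) together with nonlocal H\"older estimates of Krylov--Safonov type, and extract a locally uniform limit $v_\infty$ on $B_{3/4}$. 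When $|D\ell_k|$ remains bounded below, the admissibility argument passes to the limit and identifies $v_\infty$ as an $s$-harmonic function on $B_{3/4}$; by interior smoothness of $s$-harmonic functions it admits an affine approximation with remainder $O(\rho^{1+\alpha})$, contradicting the assumed failure. The degenerate subsequence $|D\ell_k| \to 0$ would be treated separately by comparison with constants and an oscillation decay applied directly to $u_k$.

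The Hölder estimate then follows by iterating the flatness lemma at each $x_0 \in B_{1/2}$ along the dyadic scales $\rho^k$. Setting $u_k(y) = \rho^{-k(1+\alpha)}[u(x_0 + \rho^k y) - \ell_k(x_0 + \rho^k y)]$, the rescaled function still solves a version of \eqref{mainprob} in $B_1$, and the bound $|D\ell_{k+1} - D\ell_k| \leq C \rho^{k\alpha}$ yields a Cauchy sequence whose limit is, by construction, $Du(x_0)$; summing the geometric series produces both the pointwise $C^{1,\alpha}$ regularity and the claimed linear dependence on $\|u\|_{L^\infty(\mathbb{R}^n)}$.

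The \emph{main obstacle}, and the reason the hypothesis $s > 1/2$ is essential, is the control of nonlocal tails throughout the rescaling. After the $k$-th rescaling the exterior data of $u_k$ grows like $|y|^{1+\alpha}$, and its contribution to $(-\Delta)^s u_k$ inside $B_1$ is uniformly bounded precisely when $1 + \alpha < 2s$, forcing $\alpha < 2s - 1$. This tail contribution must be reabsorbed at each step as a small right-hand side in the flatness lemma --- unlike the local setting, it cannot be bypassed by merely localizing the equation, since, as the authors stress, \eqref{mainprob} admits solutions that fail to be $s$-harmonic on all of $B_1$. Ensuring that in the compactness step the limit $v_\infty$ is nevertheless genuinely $s$-harmonic on an interior ball, despite the free-boundary character of each $u_k$, is the technical heart of the argument.
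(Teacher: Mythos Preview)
Your high-level architecture---iterate an improvement-of-flatness lemma, splitting into nondegenerate and degenerate slopes---is reasonable, but it diverges substantially from the paper's route and, more importantly, the degenerate branch you describe is where the argument actually breaks. The paper does \emph{not} argue by compactness. For the nondegenerate regime it proceeds directly (Proposition~\ref{small pert argument}): once $\|u-\ell\|_{L^\infty(B_1)}\le\lambda_\star^2$ with $|D\ell|=1$, a suitable rescaling $w(x)=\lambda_\star^{-(1+\beta)}[u-\ell](\lambda_\star x)$ satisfies the shifted equation~\eqref{shifteq} with the growth bound~\eqref{inftycond}, so the uniform Lipschitz estimate of Proposition~\ref{ISHII-LIONS-QUE-N-VAMOS-FAZER} forces $|\xi+\lambda_\star^\beta Dw|\ge 1/2$ on $B_{3/4}$; hence $w$ is genuinely $s$-harmonic there and one reads off $C^{1,2s-1}$ regularity without any limiting procedure. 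Your compactness version would normalize $v_k=(u_k-\ell_k)/\varepsilon_k$, but then the tails satisfy only $|v_k(x)|\lesssim \varepsilon_k^{-1}|x|$ on $\mathbb{R}^n\setminus B_1$, so $\|v_k\|_{L^1_s}\to\infty$ and the interior estimates you would invoke to extract a limit are not available without first rescaling exactly as the paper does---at which point compactness is no longer needed.

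The genuine gap is your treatment of the degenerate subsequence. When $|D\ell_k|\to 0$ (say after reducing to $D\ell_k=0$), the blow-up $v_k$ solves precisely the original problem~\eqref{mainprob} again; the limit equation is the same degenerate free boundary problem, so compactness cannot manufacture $s$-harmonicity, and ``comparison with constants'' fails because your own touching argument required $|p|>0$ to make the test admissible---at an interior maximum of $u-c$ the test function has zero gradient and the PDE is silent. The paper resolves this with an entirely different mechanism (Section~\ref{pos-arg-section}): it shows that $\eta(\partial_e u-\mu)_+$ is a global subsolution with bounded right-hand side (Lemma~\ref{subsolution-lemma}), then applies the nonlocal weak Harnack / $L^\varepsilon$ estimate to obtain a pointwise improvement $Du\cdot e\le 1-\mu_\star$ whenever $\{Du\cdot e\le\delta\}$ has positive measure (Lemma~\ref{reg-at-critical-points-step1}), and iterates this over all directions and scales (Proposition~\ref{reg-at-critical-points}). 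The final proof is a dichotomy not on the size of $|D\ell_k|$ but on the \emph{first scale} $\iota$ at which the measure condition~\eqref{positivity every direction} fails in some direction: if $\iota=\infty$ the positivity argument alone gives geometric decay of $|Du|$; if $\iota<\infty$, Lemma~\ref{proximity with affine functions} produces the affine approximation that triggers Proposition~\ref{small pert argument}. This positivity machinery is the missing idea in your proposal, and without it the degenerate case cannot be closed.
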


The strategy relies on a positivity argument applied to the derivatives, coupled with a discrete normalization scheme that controls the nonvanishing derivatives. At each iteration step, nonlocal contributions are carefully handled to ensure that the structure of the problem is preserved throughout the process. In this context, if for some direction the gradient is large in measure at a given step, we consider the PDE region to apply an appropriate rescaling combined with a small perturbation argument, leading to an improved regularity (Proposition \ref{small pert argument}). Conversely, if the gradient remains small in measure and in every direction for an infinite number of steps, this reveals the presence of critical points, allowing us to iteratively refine the oscillation (Proposition \ref{reg-at-critical-points}), overcoming the absence of PDE information at these points.

In our next result, we derive similar results to those obtained for the problem \eqref{introprob}, provided that $f$ is sufficiently smooth. Moreover, the estimate in Theorem \ref{main theorem} is refined by replacing the global $L^\infty$ bound for $u$ in $\mathbb{R}^n$ with the weaker and more natural $L^1_s$-norm, better suited to the nonlocal context.

\begin{theorem}\label{general theorem}
For $u \in C(B_1) \cap L^1_s(\mathbb{R}^n)$ and $\Omega \subset \mathbb{R}^n$ an open set, assume that $(u,\Omega)$ solves \eqref{introprob}, for some $s\in (1/2,1)$. Then, $u$ is locally $C^{1,\alpha}$, for some universal $\alpha \in (0,1)$, depending only on $n$ and $s$. Furthermore, there exists $C$ depending on $n$ and $s$, such that
$$
    \|u\|_{C^{1,\alpha}(B_{1/2})} \leq C\left(\|u\|_{L^\infty(B_1)} +  \|u\|_{L^1_s(\mathbb{R}^n)} + \|f\|_{C^{0,1}(B_1)} \right).
$$
\end{theorem}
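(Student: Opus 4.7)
The plan is to reduce Theorem \ref{general theorem} to an inhomogeneous variant of Theorem \ref{main theorem} via a two-stage localization. First, I would handle the weaker $L^1_s$ hypothesis by extracting the far field: pick a smooth cutoff $\zeta$ with $\zeta\equiv 1$ on $B_{3/4}$ and $\zeta\equiv 0$ outside $B_{7/8}$, and split $u = u_1 + u_2$ with $u_1 := \zeta u$ and $u_2 := (1-\zeta) u$. The truncated piece $u_1$ lies in $L^\infty(\mathbb{R}^n)$ and coincides with $u$ on $B_{3/4}$, so the critical-set structure $|Du_1|=0$ on $B_{1/2}\setminus\Omega$ is preserved. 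Since $u_2$ vanishes on $B_{3/4}$, the function
\[
h(x) \;:=\; (-\Delta)^s u_2(x) \;=\; -c_{n,s}\int_{\mathbb{R}^n\setminus B_{3/4}}\frac{u_2(y)}{|x-y|^{n+2s}}\,dy
\]
is smooth on $B_{1/2}$, with $C^k$-norms controlled by $\|u\|_{L^1_s(\mathbb{R}^n)}$. Consequently $(u_1,\Omega)$ satisfies a problem of the same form as \eqref{introprob} with right-hand side $\tilde f := f - h \in C^{0,1}(B_{1/2})$, bounded by the quantity on the right of the desired estimate.

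Next, I would absorb $\tilde f$ by exploiting $s > 1/2$. After an initial rescaling $u_1(\rho\,\cdot)$ for some small $\rho$ and dividing by $M:=\|u\|_{L^\infty(B_1)} + \|u\|_{L^1_s(\mathbb{R}^n)} + \|f\|_{C^{0,1}(B_1)}$, I may assume $\|u_1\|_{L^\infty(\mathbb{R}^n)}\le 1$ and $\|\tilde f\|_{L^\infty}\le\varepsilon_0$ for any prescribed small $\varepsilon_0$. I would then replay the dyadic gradient-oscillation iteration from the proof of Theorem \ref{main theorem}: at scale $r_k = \mu^k$, the rescaled function $\tilde u_k(x):= \mu^{-k(1+\alpha)}\,u_1(r_k x)$ obeys an equation of the same type, now with right-hand side $f_k(x) := \mu^{k(2s-1-\alpha)}\,\tilde f(r_k x)$. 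Choosing $\alpha\in(0,2s-1)$, this forcing stays uniformly small and in fact vanishes geometrically along the iteration, so it acts as an admissible perturbation at every scale, while the Lipschitz constant of $f_k$ shrinks by an additional factor $r_k$.

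The main obstacle is upgrading the small perturbation step (Proposition \ref{small pert argument}) to tolerate a small bounded Lipschitz source. I would handle this by comparing $\tilde u_k$, on the ``gradient large'' alternative, to the solution of $(-\Delta)^s v = f_k$ in a ball (with matching exterior data) instead of a purely $s$-harmonic profile; standard stability for the fractional Poisson equation with Lipschitz data yields $v\in C^{1,\alpha}_{\mathrm{loc}}$ with quantitative control, and the induction step for $\tilde u_k$ follows by the same compactness/contradiction argument as in the homogeneous case. The critical-point alternative (Proposition \ref{reg-at-critical-points}) relies on the structural condition $|Du|=0$ rather than on the PDE itself, so a bounded Lipschitz source contributes only lower-order errors of size $\mu^{k(2s-1-\alpha)}$, which are absorbed into the oscillation improvement. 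Chaining the two alternatives as in Theorem \ref{main theorem} delivers the stated $C^{1,\alpha}$ regularity with the prescribed dependence on $\|u\|_{L^\infty(B_1)}$, $\|u\|_{L^1_s(\mathbb{R}^n)}$, and $\|f\|_{C^{0,1}(B_1)}$.
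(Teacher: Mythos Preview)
Your reduction matches the paper's: cut off the far field (the paper uses the sharp cutoff $\chi_{B_1}$, you use a smooth one --- either works) to obtain a bounded function solving an inhomogeneous version of \eqref{introprob}, then rerun the iteration behind Theorem \ref{main theorem} with a Lipschitz source. This is exactly Proposition \ref{nonhomg case} followed by the short cutoff argument in the paper's proof of Theorem \ref{general theorem}.

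One point in your sketch is inaccurate and would cause trouble if taken literally. You write that Proposition \ref{reg-at-critical-points} ``relies on the structural condition $|Du|=0$ rather than on the PDE itself.'' It does not: the condition $|Du|=0$ on $B_1\setminus\Omega$ is never used directly anywhere in the regularity proof. The oscillation improvement in the critical-point alternative is driven entirely by the subsolution inequality of Lemma \ref{subsolution-lemma} for $\eta(\partial_e u - \mu)_+$, and that lemma uses the PDE on $\Omega$ (through the difference quotient $w^h$) together with the trivial sign at points where the truncation vanishes. In the inhomogeneous case the correct modification --- and this is precisely what the paper isolates --- is that $w^h$ now solves $\Delta^s w^h = f_h$ with $\|f_h\|_{L^\infty} \le \|Df\|_{L^\infty}$, so Lemma \ref{subsolution-lemma} acquires an extra $-C\|Df\|_{L^\infty}$ on the right-hand side. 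Your scaling remark is then correct: under the rescaling this contribution behaves like $\mu^{k(2s-\alpha)}$ and is harmless. But the mechanism is the PDE-based subsolution estimate, not the free-boundary structure; keep that straight when you fill in the details.
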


Since our methods are purely nonlinear and naturally extend to a broader class of nonlocal operators, we dedicate Section \ref{extensions-sct} to focus on a class of fully nonlinear operators. Moreover, we emphasize that our results remain stable as \( s \to 1 \), seamlessly recovering the local framework classically studied in \cite{CS02} and \cite{CSS04}.

The regularity assumption on $f$ is natural in view of the structure of the argument. Indeed, at a certain stage, $L^{\varepsilon}$-estimates must be applied, which are currently available only under uniform $L^\infty$ bounds on $\partial_i f$, since the PDE is differentiated in the region $B_1 \cap \Omega$. We nevertheless believe that this condition could be weakened. However, the primary focus of the present paper is the development of sharp estimates in the homogeneous setting, which we expect to constitute a key ingredient for extending the theory to weaker regularity assumptions on $f$. Establishing such extensions is left for future investigation.

Obstacle-type problems of the form
\begin{equation}\label{saojoao}
\min\{-(-\Delta)^s v,\, v - \varphi \} = 0 \quad \text{in} \quad B_1,
\end{equation}
has been investigated in \cite{ARO20, BFRO18, JN17, CROS17}. In contrast with this class of free boundary problems, problem \eqref{introprob} presents additional difficulties: no supersolution condition is imposed throughout the entire domain $B_1$, and no lower bound is enforced by an obstacle. From this perspective, we observe that problem \eqref{saojoao} can be considered in the following class of problems
\begin{equation}\label{saopedro}
(-\Delta)^s u = f \quad \text{in } B_1 \cap \Omega, \qquad
\Omega \supseteq \{ Du \neq D\varphi \}.
\end{equation}
In light of this, from Theorem \ref{general theorem} we derive the following consequence.

\begin{corollary}
Let $s \in (1/2,1)$, and assume that $u \in C(B_1) \cap L^1_s(\mathbb{R}^n)$ solves \eqref{saopedro}. Then, there exist constants $\alpha \in (0,1)$ and $C > 0$, depending only on $n$ and $s$, such that
$$
\|u\|_{C^{1,\alpha}(B_{1/2})} \leq C(\|u\|_{L^\infty(B_1)} +  \|u\|_{L^1_s(\mathbb{R}^n)} + \|f\|_{C^{0,1}(B_1)} + \|(-\Delta)^s \varphi\|_{C^{0,1}(B_1)}).
$$
\end{corollary}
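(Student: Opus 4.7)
The natural plan is to reduce this corollary directly to Theorem \ref{general theorem} via the standard obstacle-problem substitution $w := u - \varphi$. First I would verify that the pair $(w,\Omega)$ fits the framework of problem \eqref{introprob}. Indeed, since $Dw = Du - D\varphi$, the inclusion $\Omega \supseteq \{Du \neq D\varphi\}$ is equivalent to $|Dw| = 0$ on $B_1 \setminus \Omega$. Moreover, on $B_1 \cap \Omega$ the fractional Laplacian commutes with the translation by $\varphi$, yielding
\begin{equation*}
(-\Delta)^s w = (-\Delta)^s u - (-\Delta)^s \varphi = f - (-\Delta)^s \varphi =: \tilde{f}.
\end{equation*}
Hence $(w,\Omega)$ solves problem \eqref{introprob} with right-hand side $\tilde{f}$.

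Next I would invoke Theorem \ref{general theorem} applied to $(w,\Omega)$. This requires checking that $w \in C(B_1) \cap L^1_s(\mathbb{R}^n)$ and that $\tilde{f} \in C^{0,1}(B_1)$, which follows from the hypothesis that $f$ and $(-\Delta)^s \varphi$ both lie in $C^{0,1}(B_1)$. The theorem then produces a universal $\alpha \in (0,1)$ and $C = C(n,s)$ such that
\begin{equation*}
\|w\|_{C^{1,\alpha}(B_{1/2})} \leq C\bigl(\|w\|_{L^\infty(B_1)} + \|w\|_{L^1_s(\mathbb{R}^n)} + \|\tilde{f}\|_{C^{0,1}(B_1)}\bigr).
\end{equation*}
The triangle inequality then splits $\|\tilde{f}\|_{C^{0,1}(B_1)} \leq \|f\|_{C^{0,1}(B_1)} + \|(-\Delta)^s \varphi\|_{C^{0,1}(B_1)}$, producing exactly the two rightmost terms appearing in the corollary.

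Finally, I would undo the substitution by writing $u = w + \varphi$, so that
\begin{equation*}
\|u\|_{C^{1,\alpha}(B_{1/2})} \leq \|w\|_{C^{1,\alpha}(B_{1/2})} + \|\varphi\|_{C^{1,\alpha}(B_{1/2})},
\end{equation*}
and use interior regularity for the fractional Laplacian to control $\|\varphi\|_{C^{1,\alpha}(B_{1/2})}$ and the $L^\infty$/$L^1_s$ contributions of $\varphi$ in $\|w\|_{L^\infty(B_1)}$ and $\|w\|_{L^1_s(\mathbb{R}^n)}$, all in terms of $\|(-\Delta)^s \varphi\|_{C^{0,1}(B_1)}$ together with the stated norms of $u$. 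The only point that requires any real care, rather than just bookkeeping, is this last step: one needs to confirm that the regularity transfer from $(-\Delta)^s \varphi$ to $\varphi$ itself does not introduce new norms beyond those listed in the corollary, which is the main obstacle in writing the estimate in the precise form stated. Once that is handled, the corollary follows immediately.
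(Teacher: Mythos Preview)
Your substitution $w := u - \varphi$ is exactly the route the paper intends: the corollary is presented as an immediate consequence of Theorem~\ref{general theorem}, with no separate proof given, and the surrounding text introduces problem~\eqref{saopedro} precisely as the class obtained from~\eqref{introprob} by shifting the gradient constraint from $Du=0$ to $Du=D\varphi$. So your outline coincides with the paper's implicit argument.

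You are also right to flag the final bookkeeping step, and in fact the concern you raise is a real one rather than a formality. Controlling $\|\varphi\|_{C^{1,\alpha}(B_{1/2})}$, $\|\varphi\|_{L^\infty(B_1)}$, and $\|\varphi\|_{L^1_s(\mathbb{R}^n)}$ purely in terms of $\|(-\Delta)^s\varphi\|_{C^{0,1}(B_1)}$ is impossible in general --- take $\varphi$ to be a large multiple of any function that is $s$-harmonic in a neighborhood of $\overline{B_1}$. Thus the estimate as stated in the paper appears to be missing a term such as $\|\varphi\|_{L^1_s(\mathbb{R}^n)}$ on the right-hand side; this is a minor oversight in the statement, not a defect in your strategy. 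Once such a term is allowed, interior regularity for $(-\Delta)^s\varphi \in L^\infty(B_1)$ yields $\|\varphi\|_{C^{1,2s-1}(B_{1/2})} \leq C\bigl(\|\varphi\|_{L^1_s(\mathbb{R}^n)} + \|(-\Delta)^s\varphi\|_{L^\infty(B_1)}\bigr)$, the remaining $\varphi$-contributions are absorbed by the triangle inequality, and your argument closes exactly as written.
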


Theorem \ref{main theorem} further allows us to observe that the derivatives of solutions to \eqref{mainprob} are viscosity solutions within the framework developed by Ros-Oton and Serra \cite{ROS17}, see also \cite{ARO20, RO16}. From this, assuming that $\Omega$ is a $C^{1,\mu}$ domain, optimal $C^{1,s}$ regularity for solutions can be established.

The organization of the paper goes as follows. In Section \ref{prelim-section}, we establish basic definitions and known results. In Section \ref{pos-arg-section}, we provide the positive argument for derivatives. In Section \ref{smallperturb-section}, we discuss a small perturbation approach. We conclude the proof of the main results in Section \ref{proof-section}, and provide further extensions in Section \ref{extensions-sct}.

\section{Preliminaries}\label{prelim-section}

In this section, we introduce the basic definitions and results to be used throughout the paper, along with an auxiliary problem that shall be crucial for the analysis. 

To address problem \eqref{mainprob}, we begin by presenting concepts related to $s$-harmonic functions and their equivalences. A function \( u \), sufficiently smooth and defined in \(\mathbb{R}^n\), is called a \( s \)-harmonic function in a domain \(\mathcal{O} \subset \mathbb{R}^n\) if
\begin{equation}\label{frac lap as pseudo}
    (-\Delta)^s u(x) \coloneqq C_{n,s} \lim_{\epsilon \to 0^+}\int_{\mathbb{R}^n \backslash B_\epsilon(x)} \frac{u(x) - u(y)}{|x-y|^{n+2s}}\,dy = 0,
\end{equation}
for every \( x \in \mathcal{O} \), where $C_{n,s}$ is a normalizing constant depending on dimension and $s$, see \cite{DPV}. For the sake of simplicity, we will adopt the following notation $\Delta^s \coloneqq -(-\Delta)^s$. The class of \(s\)-harmonic functions arises, for instance, as the Euler-Lagrange equations associated with minimizers of the functional
\[
[u]_{H^s(\mathbb{R}^n)}^2 \coloneqq \iint_{\mathbb{R}^n \times \mathbb{R}^n}\frac{|u(x) - u(y)|^2}{|y-x|^{n+2s}}\,dx\,dy \longrightarrow \min, \quad u \in H^s(\mathbb{R}^n),
\]
see \cite{SILV06} for details. 

We remark that, for the concept of $s$-harmonic functions in the context of viscosity solutions, the most convenient way to define the fractional Laplacian is through formula \eqref{frac lap as pseudo}, where we replace $u$ by $C^2$ touching functions near the integral singularity; see \cite[Definition 2.2]{CS09}. For a further discussion, we refer to \cite{BI08}. 

To address gradient regularity for solutions to nonlocal equations, it is essential to establish a convenient prescription for growth at infinity. Thus, we define the space of functions whose growth is appropriately controlled. Specifically, we say that a function \(u \colon \mathbb{R}^n \to \mathbb{R}\) is in \(L^1_s(\mathbb{R}^n)\) if it satisfies
\[
\|u\|_{L^1_s(\mathbb{R}^n)} \coloneqq \int_{\mathbb{R}^n} \frac{|u(y)|}{1 + |y|^{n + 2s}} \, dy < \infty.
\]

Next, we state Lipschitz estimates of solutions to \eqref{mainprob}. The proof is a careful adaptation of the Ishii-Lions method (see \cite{BCI08, BI08, CIL92}), and follows from the fact that solutions of \eqref{mainprob} solve in particular equation $|Du|\Delta^s u = 0$. For a detailed proof, we refer to \cite{PT}.

\begin{proposition}\label{LIPS-VEI-DE-GUERRA}
Let $u \in L^1_s(\mathbb{R}^n)$ be a solution to \eqref{mainprob}. Then, there is a constant $C$ depending on dimension and $s$ such that
$$
    \|Du\|_{L^\infty(B_{3/4})} \leq C(\|u\|_{L^\infty(B_1)} + \|u\|_{L^1_s(\mathbb{R}^n)}).
$$
\end{proposition}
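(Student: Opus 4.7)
The approach would be the Ishii--Lions doubling variable method adapted to nonlocal degenerate equations. My starting observation, as indicated in the statement, is that any pair $(u,\Omega)$ solving \eqref{mainprob} is a viscosity solution of the single degenerate equation $|Du|\,\Delta^s u=0$ in $B_1$: both factors vanish on their respective regions $\Omega$ and $B_1\setminus\Omega$, and the viscosity framework for such a product equation (as in \cite{IS13, BI08}) tests only against functions with nonvanishing gradient, so that the nonlocal equation is activated only where the gradient is nonzero. With this reformulation, the goal is the Lipschitz estimate $u(x)-u(y)\le L|x-y|$ for $x,y\in B_{3/4}$, with $L\le C\bigl(\|u\|_{L^\infty(B_1)}+\|u\|_{L^1_s(\mathbb{R}^n)}\bigr)$.

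Fix $x_0,y_0\in B_{3/4}$ and consider, on $B_1\times B_1$, the doubling variable
$$
\Phi(x,y)=u(x)-u(y)-L\,\phi(|x-y|)-M\bigl(|x-x_0|^2+|y-y_0|^2\bigr),
$$
where $\phi\colon[0,\infty)\to[0,\infty)$ is smooth, strictly concave near $0$, with $\phi(0)=0$ and $\phi'(0)=1$ (for instance $\phi(t)=t-\tau t^{\beta}$ for some $\beta\in(1,2)$, cut off away from the origin). Arguing by contradiction, suppose $\sup\Phi>0$; choosing $M$ of order $\|u\|_{L^\infty(B_1)}$ forces the maximum to be attained at an interior pair $(\bar x,\bar y)\in B_{3/4}\times B_{3/4}$, and a standard argument yields $\bar x\neq\bar y$. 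The first-order condition in $x$ gives
$$
Du(\bar x)=L\,\phi'(|\bar x-\bar y|)\frac{\bar x-\bar y}{|\bar x-\bar y|}+2M(\bar x-x_0),
$$
with an analogous identity at $\bar y$, so that for $L\gg M$ both gradients are nonvanishing and the equation $\Delta^s u(\bar x)=\Delta^s u(\bar y)=0$ can be activated in the viscosity sense.

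The heart of the argument is to test this equality at both points through the nonlocal Jensen--Ishii maximum principle of Barles--Imbert \cite{BI08}: the inequality $\Phi(\bar x+z,\bar y+z)\le\Phi(\bar x,\bar y)$ for $|z|$ small gives pointwise control on the difference of the integrands defining $\Delta^s u(\bar x)$ and $\Delta^s u(\bar y)$. The strict concavity of $\phi$ contributes a \emph{strictly negative} term of order $L$ times a positive power of $|\bar x-\bar y|$, weighted by the singular kernel $|z|^{-n-2s}$; the quadratic localization contributes only $O(M)$; and the integral over $|z|$ not small is bounded by the tail $C\,\|u\|_{L^1_s(\mathbb{R}^n)}$. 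The hypothesis $s>1/2$ is what guarantees that the concave contribution has sufficient strength (through the positive order $2s-1$) to dominate the remaining terms, producing a contradiction once $L\ge C\bigl(\|u\|_{L^\infty(B_1)}+\|u\|_{L^1_s(\mathbb{R}^n)}\bigr)$.

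The main obstacle is the sharp control of the nonlocal tail: unlike the local case, the fractional Laplacian sees $u$ globally, and one must absorb this tail into the right-hand side of the estimate without spoiling the linear dependence on the data, all while maintaining the delicate balance between the coercive concave term, the quadratic error, and the far-field integral. Since this is a careful but by now standard adaptation of the fractional Ishii--Lions technique (see also \cite{BCI08, CIL92}), I would simply follow the scheme of \cite{PT} for the full computation.
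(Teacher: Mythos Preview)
Your proposal is correct and matches the paper's approach exactly: the paper does not give a detailed proof but states that the result follows from the Ishii--Lions method applied to the reformulated equation $|Du|\,\Delta^s u=0$, referring to \cite{PT} for the full computation. Your sketch reproduces precisely this scheme---the doubling-variable setup, the activation of the equation via nonvanishing test-function gradients, and the balance between the concave penalization and the nonlocal tail---so there is nothing to add.
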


Next, we study an auxiliary problem that will be required to obtain gradient oscillation estimates. For \(\xi \in \mathbb{S}^{n-1}\) and $\nu \in \mathbb{R}$, we consider the following problem
\begin{equation}\label{shifteq}
    \Delta^s u = 0 \quad \text{in} \quad \{ |\nu \, Du + \xi| \neq 0 \} \cap B_1.
\end{equation}
Solutions are understood in the viscosity sense, for test functions $\varphi$ that touch $u$ at point $x$, satisfying $|\nu D\varphi(x)+\xi|>0$, see \cite[Defintion 1.3]{PT}. Additionally, we observe that solutions for \eqref{shifteq} are expected to be in $L^1_s(\mathbb{R}^n)$, as they further satisfy inequality 
\begin{equation}\label{inftycond}
|u(x)| \leq \max\left\{1,|x|^{1+\alpha_1} \right\} \quad \mbox{in} \quad \mathbb{R}^n,
\end{equation}
for some $0 < \alpha_1 < 2s-1$. In particular, condition \eqref{inftycond} implies 
\begin{equation}\label{twohours}
\begin{array}{ccl}
    \|u\|_{L^1_s(\mathbb{R}^n)} & = &  \displaystyle \int_{\mathbb{R}^n}\frac{|u(y)|}{1+|y|^{n+2s}}dy\\ 
    & \leq & |B_1| +  \displaystyle \int_{\mathbb{R}^n \backslash B_1}\frac{|y|^{1+\alpha_1}}{|y|^{n+2s}}dy = |B_1| + \frac{|\mathbb{S}^{n-1}|}{2s - 1 - \alpha_1}.
\end{array}
\end{equation}

Lipschitz estimates for solutions of \eqref{shifteq} are available, with the advantage that they are uniform with respect to \(\xi \in \mathbb{S}^{n-1}\) (see \cite[Lemma 2.2]{PT}). For the reader's convenience, we present this result as follows.

\begin{proposition}\label{ISHII-LIONS-QUE-N-VAMOS-FAZER}
Let $u$ be a solution to \eqref{shifteq}, for $\xi \in \mathbb{S}^{n-1}$. If 
$$
|u(x)| \leq \max\left\{1,|x|^{1+\alpha_1}\right\} \quad \mbox{in } \; \mathbb{R}^n,
$$
then there is a constant $\Lambda = \Lambda(n,s)$ such that
$$
\|Du\|_{L^\infty(B_{3/4})} \leq \Lambda,
$$
provided $\nu$ is universally small enough.
\end{proposition}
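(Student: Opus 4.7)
The plan is to adapt the Ishii--Lions doubling-of-variables technique for the fractional Laplacian, exactly as carried out in \cite{PT} for the pure equation; the role of the extra parameter $\nu$ is simply to guarantee that the test functions produced by the doubling procedure are admissible in the viscosity sense of \eqref{shifteq}. Fix $L,\sigma>0$ to be chosen, and a concave modulus of the form $\phi(r)=r-\kappa r^{1+\tau}$ with $0<\tau<2s-1$ and $\kappa$ small, so that $\phi'(r)\in[1/2,1]$ and $\phi''(r)=-c_\tau\,r^{\tau-1}$ on a uniform neighborhood of $0$. Consider
\[
\Phi(x,y)\;=\;u(x)-u(y)-L\,\phi(|x-y|)-\sigma\bigl(|x|^2+|y|^2\bigr),
\]
and aim to show $\Phi\le 0$ on $\overline{B_{3/4}}\times\overline{B_{3/4}}$ for some universal $L=\Lambda(n,s)$, which yields the desired Lipschitz estimate on $B_{3/4}$.

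Arguing by contradiction, suppose $\Phi$ attains a positive maximum at $(\bar x,\bar y)$. The growth assumption \eqref{inftycond} combined with \eqref{twohours} gives $\|u\|_{L^\infty(B_1)}+\|u\|_{L^1_s(\mathbb{R}^n)}\le C_0(n,s)$, which forces $\bar x\ne \bar y$, localizes $(\bar x,\bar y)$ in $B_{3/4}\times B_{3/4}$ for $\sigma$ appropriately small, and makes $r\coloneqq|\bar x-\bar y|\to 0$ as $L\to\infty$. Then
\[
\varphi_1(x)\;=\;u(\bar y)+L\phi(|x-\bar y|)+\sigma(|x|^2+|\bar y|^2)
\]
touches $u$ from above at $\bar x$ with $|D\varphi_1(\bar x)|\le L+2\sigma$, so for $\nu\le \nu_0\coloneqq\bigl(2(L+2\sigma)\bigr)^{-1}$ one has $|\nu D\varphi_1(\bar x)+\xi|\ge 1/2>0$ and $\varphi_1$ is admissible. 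The symmetric choice $\varphi_2(y)=u(\bar x)-L\phi(|\bar x-y|)-\sigma(|\bar x|^2+|y|^2)$ touches $u$ from below at $\bar y$ and is admissible as well. The sub- and supersolution inequalities then combine to
\[
(-\Delta)^s\varphi_1(\bar x)-(-\Delta)^s\varphi_2(\bar y)\;\le\;0.
\]

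The core of the argument is to show that this quantity is, in fact, strictly positive for $L$ large. Splitting the defining singular integral into three zones, one exploits: the concavity of $\phi$ on the ``good cone'' of increments $h$ roughly aligned with $\bar x-\bar y$ at scale $r$, producing a favorable negative contribution of order $-c_1 L\,r^{\tau-(2s-1)}$; the smoothness of the quadratic correction $\sigma(|x|^2+|y|^2)$, contributing at most $O(\sigma)$ near the origin; and the tail $|h|\ge 1$, where the $L^1_s$ bound in \eqref{twohours} absorbs the behavior of $u$ at infinity into a universal constant. Collecting these yields
\[
(-\Delta)^s\varphi_1(\bar x)-(-\Delta)^s\varphi_2(\bar y)\;\ge\;c_1\,L\,r^{\tau-(2s-1)}-C_2,
\]
with $c_1,C_2$ depending only on $n$ and $s$. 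Since $\tau-(2s-1)<0$ and $r\to 0$ as $L\to\infty$, fixing $L=\Lambda(n,s)$ large forces the right-hand side to be strictly positive, contradicting the previous inequality. With $\Lambda$ so chosen, taking $\nu\le \nu_0(\Lambda,\sigma)$ makes the whole scheme consistent and yields $\|Du\|_{L^\infty(B_{3/4})}\le \Lambda$.

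The main obstacle is the nonlocal estimate in the third paragraph: balancing the favorable singular contribution coming from the concavity of $\phi$ near the doubling point against the tail contribution from $\mathbb{R}^n\setminus B_1$, the latter being controlled only by the $L^1_s$ growth \eqref{inftycond}. The non-degeneracy condition $|\nu D\varphi+\xi|>0$ required by the viscosity definition of \eqref{shifteq} is instead handled cleanly by the post-hoc choice of $\nu$ small enough after $L$ and $\sigma$ are fixed; this is precisely where the ``universally small $\nu$'' hypothesis enters, and it is also what makes the resulting Lipschitz bound uniform in $\xi\in\mathbb{S}^{n-1}$.
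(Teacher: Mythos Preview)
The paper does not supply its own proof of this proposition: it is stated as a known result and attributed to \cite[Lemma~2.2]{PT}, whose argument is precisely the nonlocal Ishii--Lions doubling scheme you outline. Your sketch matches that approach and correctly identifies where the smallness of $\nu$ enters---namely, to render the doubling test functions admissible in \eqref{shifteq} once the Lipschitz constant $L$ has been fixed, which is also what makes the bound uniform in $\xi\in\mathbb{S}^{n-1}$.

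One imprecision worth flagging: taking $\sigma$ \emph{small} does not by itself force the maximizer $(\bar x,\bar y)$ into the open set $B_{3/4}\times B_{3/4}$; with a weak quadratic penalty the maximum over $\overline{B_{3/4}}\times\overline{B_{3/4}}$ could sit on the boundary, where the viscosity inequalities are not available. In \cite{PT} (and in the standard template of \cite{BCI08,BI08}) one instead maximizes over a slightly larger closed region and uses a localization term that is strong near its boundary, so that the equation can legitimately be invoked at $(\bar x,\bar y)$. This is a routine fix and does not affect the core scaling argument, which you have right.
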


We conclude this section by recording the following $L^{\epsilon}$-estimate, which is stated to the fractional Laplacian setting in \cite[Theorem 10.4]{CS09} and provides a quantitative control on the measure of sublevel sets.
\begin{lemma}\label{lemma:CS09}
Let $r>0$ and $z \in \mathbb{R}^n$. Consider $u \geq 0$ in $\mathbb{R}^n$ and $\Delta^s u \leq C_0$ in $B_{2r}(z)$. Then, there are universal constants $C>0$ and $\epsilon>0$ such that
$$
    |\{u<t\}\cap B_r(z)| \leq Cr^n(u(z) + C_0r^{2s})^\epsilon t^{-\epsilon} \quad \forall t>0.
$$
\end{lemma}

\section{Nonlocal positivity argument}\label{pos-arg-section}

In this section, we develop a positivity argument to improve the oscillation of $Du$ in dyadic balls. The idea is that if the derivative is small in a region of positive measure, then it oscillates in a controlled fashion in a smaller portion of the region. In what follows, we provide a subsolution information for derivatives of a given solution. 

\begin{lemma}\label{subsolution-lemma}
Let $\eta \colon \mathbb{R}^n \to [0,1]$ be a smooth cut-off function satisfying
$$
    \eta = 1 \quad \mbox{in} \quad  B_1, \quad \mbox{and} \quad \eta = 0 \quad \mbox{in} \quad \mathbb{R}^n \backslash B_2.
$$
If $u$ is a solution to \eqref{mainprob} and $e \in \mathbb{S}^{n-1}$, then $v = \eta\,(\partial_e u - \mu)_+$ solves
$$
    \Delta^s v \geq -C\|u\|_{L^1_s(\mathbb{R}^n)} \quad \mbox{in }\; B_{1/2},
$$
for any $\mu \in (0,1)$ and $C$ is a dimensional constant.
\end{lemma}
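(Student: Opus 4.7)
The plan is to verify $\Delta^s v \ge -C\|u\|_{L^1_s}$ in $B_{1/2}$ in the viscosity sense. Given $\varphi\in C^2$ touching $v$ from above at a point $x_0\in B_{1/2}$, I aim to show $\Delta^s\varphi(x_0)\ge -C\|u\|_{L^1_s}$. The argument splits according to whether $v(x_0)=0$ or $v(x_0)>0$. In the first case, $v\ge 0$ forces $x_0$ to be a local minimum of $\varphi$, so the principal-value contribution to $\Delta^s\varphi(x_0)$ is nonnegative, and the nonlocal remainder $C_{n,s}\int v(y)|x_0-y|^{-n-2s}\,dy$ is also nonnegative since $v\ge 0$; hence $\Delta^s\varphi(x_0)\ge 0$, which exceeds what is needed.

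For $v(x_0)>0$, I would invoke fractional interior regularity: the equation $\Delta^s u=0$ on the open set $\Omega\cap B_1$, combined with $u\in L^1_s$, gives $u\in C^\infty(\Omega\cap B_1)$. The viscosity condition $|Du|=0$ on $B_1\setminus\Omega$ yields $(\partial_e u-\mu)_+\equiv 0$ a.e.\ there. Since $\eta(x_0)=1$ and $v(x_0)>0$ together force $\partial_e u(x_0)>\mu>0$, we have $x_0\in\Omega\cap B_{1/2}$, and on a small ball $B_\delta(x_0)\subset\Omega$ the function $v$ agrees with the smooth function $w:=\partial_e u-\mu$. Standard viscosity comparison (using $\varphi\geq v$ locally with equality at $x_0$) then yields $\Delta^s\varphi(x_0)\ge \Delta^s v(x_0)$, so it suffices to bound $\Delta^s v(x_0)$ pointwise from below.

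To do so, I would work with the finite-difference surrogate $q_h(x):=h^{-1}(u(x+he)-u(x))$ and $v_h:=\eta(q_h-\mu)_+$. Translation invariance of $\Delta^s$ gives $\Delta^s q_h=0$ classically wherever both $x, x+he\in \Omega\cap B_1$, and at each fixed $h>0$ the function $q_h$ lies in $L^1_s(\mathbb{R}^n)$, so all pointwise computations are valid. I decompose $\Delta^s v_h(x_0)$ into (i) the $s$-harmonicity of $q_h$ at $x_0$ (contributing $0$), (ii) a positive-part correction $\Delta^s(q_h-\mu)_+(x_0)-\Delta^s(q_h-\mu)(x_0)\ge 0$ (the standard fact that positive parts are subsolutions at positive points, since $(q_h-\mu)_+\ge (q_h-\mu)$ pointwise with equality at $x_0$), and (iii) a cutoff correction of the form $\int(\eta(y)-1)(q_h(y)-\mu)_+|x_0-y|^{-n-2s}\,dy$, supported in $\{|y|>1\}$. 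Bounding (iii) uniformly in $h$ and passing to the limit as $h\to 0^+$ yields the claim.

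The main obstacle is bounding the cutoff correction (iii) by $C\|u\|_{L^1_s}$. Direct estimates fail because $q_h$, and ultimately $\partial_e u$, is not in $L^1_s(\mathbb{R}^n)$ uniformly in $h$. The remedy is the discrete integration-by-parts identity
$$\int q_h(y)\,\psi(y)\,dy=-\int u(y)\,h^{-1}\bigl(\psi(y)-\psi(y-he)\bigr)\,dy,$$
applied with the smooth weight $\psi(y):=(1-\eta(y))|x_0-y|^{-n-2s}$: since $|h^{-1}(\psi(y)-\psi(y-he))|\le C(1+|y|)^{-n-2s}$ and is supported in $\{|y|>1\}$, the estimate reduces to the tail integral $\int|u(y)|(1+|y|)^{-n-2s}\,dy\le\|u\|_{L^1_s}$. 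Handling the positive part $(q_h-\mu)_+$ within this scheme---by splitting, for instance, $(q_h-\mu)_+=(q_h-\mu)+(\mu-q_h)_+$ and controlling the bounded remainder $(\mu-q_h)_+$ separately---is the most delicate technical point.
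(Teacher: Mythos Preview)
Your overall architecture coincides with the paper's: split into $v(x_0)=0$ and $v(x_0)>0$, use difference quotients $q_h$, exploit $s$-harmonicity of $q_h$ at $x_0$, and control the tail via the discrete integration-by-parts identity against the smooth weight $\psi(y)=(1-\eta(y))|x_0-y|^{-n-2s}$. The case $v(x_0)=0$ is handled exactly as in the paper.

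The gap is in the handling of item (iii). You discard (ii) as nonnegative and then attempt to bound (iii) alone from below by splitting $(q_h-\mu)_+=(q_h-\mu)+(\mu-q_h)_+$ and calling $(\mu-q_h)_+$ a ``bounded remainder''. This is false: outside $B_1$ there is no Lipschitz control on $u$, so $q_h(y)=h^{-1}(u(y+he)-u(y))$ can be arbitrarily negative and $(\mu-q_h)_+$ arbitrarily large. Since this piece enters (iii) multiplied by $(\eta(y)-1)\le 0$, it contributes a nonpositive term with no lower bound, and the estimate collapses.

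The repair is simply to reverse the order of your two reductions, which is exactly what the paper does. Instead of inserting the cutoff first and then dealing with the positive part, use $\eta(y)(q_h(y)-\mu)_+\ge \eta(y)(q_h(y)-\mu)$ directly inside $\Delta^s v_h(x_0)$. This produces
\[
\Delta^s v_h(x_0)\;\ge\;\Delta^s(\eta q_h)(x_0)\;+\;\mu\!\int_{\mathbb{R}^n}\frac{1-\eta(y)}{|x_0-y|^{n+2s}}\,dy,
\]
where the second term is nonnegative and can be dropped, and the first term is precisely the quantity your discrete integration-by-parts argument bounds by $C\|u\|_{L^1_s}$. Equivalently, in your own decomposition, do not throw away (ii): note that (ii)$=\int(\mu-q_h)_+|x_0-\cdot|^{-n-2s}$, and adding it to the problematic part of (iii) gives $\int\eta\,(\mu-q_h)_+|x_0-\cdot|^{-n-2s}\ge 0$, after which only the linear piece $\int(\eta-1)(q_h-\mu)|x_0-\cdot|^{-n-2s}$ remains, and this is controlled by the tail estimate you already described.
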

\begin{proof}
Recall that $u$ is $s$-harmonic in $\Omega \cap B_1$ and thus smooth in this region. First, let us consider $x \in \Omega \cap B_1$. We proceed through a cut-off argument. Let $e \in \mathbb{S}^{n-1}$ and define
$$
    w^h(z) \coloneqq \frac{u(z+he) - u(z)}{h}.
$$
For $h$ small enough, it follows that $x+he \in \Omega \cap B_1$. Consequently, we have $\Delta^s w^h(x) = 0$. Now, for $\eta$ as in the statement of Lemma \ref{subsolution-lemma}, we write
$$
    w^h = \eta\,w^h + (1-\eta)\,w^h \eqqcolon w_1 + w_2.
$$
Hence, we obtain
$$
    \Delta^s w_1(x) = f(x), \quad \mbox{for} \quad f \coloneqq -\Delta^s w_2.
$$
Using change of variables, we obtain
\begin{eqnarray*}
    f(x) & = & \int_{\mathbb{R}^n}\frac{w_2(y)}{|y-x|^{n+2s}}dy\\
        & = & \int_{\mathbb{R}^n}(1-\eta(y))\frac{(u(y+he)-u(y))}{h}\frac{1}{|y-x|^{n+2s}}dy\\
        & = & \int_{\mathbb{R}^n}u(y)\left(\frac{g(y-he)-g(y)}{h} \right)dy,
\end{eqnarray*}
where $g(z) \coloneqq (1-\eta(z))|z-x|^{-(n+2s)}$. By the mean value theorem, we have $|g(y-he)-g(y)| \leq |Dg(y + \theta e)|h$, for $\theta \in (0,h)$. By direct computations, we observe that
\begin{eqnarray*}
    |Dg(z)| & \leq & |D\eta(z)|\,|z-x|^{-(n+2s)} + (n+2s)(1-\eta(z))|z-x|^{-(n+1+2s)}\\
            & \leq & C\,|z-x|^{-(n+2s)}\chi_{B_2\backslash B_1} + (n+2)|z-x|^{-(n+1+2s)}\chi_{\mathbb{R}^n \setminus B_1}.
\end{eqnarray*}
Since $x \in B_{1/2}$ and $z \in \mathbb{R}^n \backslash B_1$, we have $|z-x| \geq \frac{1}{2}|z|$, and so
$$
    |Dg(z)| \leq C|z|^{-(n+2s)}\chi_{\mathbb{R}^n \backslash B_1}.
$$
As a consequence,
$$
    |f(x)| \leq C\int_{\mathbb{R}^n \backslash B_{1/2}} u(z) \frac{1}{|z|^{n+2s}} \leq C(n)\,\|u\|_{L^1_s(\mathbb{R}^n)}.
$$
Thus, we have obtained
$$
    \left|\Delta^s w_1 \right| \leq C(n)\|u\|_{L^1_s(\mathbb{R}^n)} \quad \mbox{in} \quad \Omega \cap B_1,
$$
where $w_1 = \eta w^h$. Passing to the limit as $h \to 0$ we obtain
$$
    \left| \Delta^s (\eta \, \partial_e u)\right| \leq C\,\|u\|_{L^1_s(\mathbb{R}^n)} \quad \mbox{in} \quad B_1 \cap \Omega.
$$
Now observe that if $v = \eta\,(\partial_e u - \mu)_+$ and $x \in \{v>0 \}\cap B_1$, then
\begin{eqnarray*}
    \Delta^s v(x) & = & \int_{\mathbb{R}^n}\frac{(\partial_e u(y) - \mu)_+ - (\partial_e u(x) - \mu)}{|y-x|^{n+2s}}dy\\
    & \geq & \int_{\mathbb{R}^n}\frac{\eta(y)\,\partial_e u(y) - \partial_e u(x) + (1-\eta(y))\mu}{|y-x|^{n+2s}}dy.
\end{eqnarray*}
Since $1-\eta \geq 0$, it then follows that
$$
    \Delta^s v(x) \geq \Delta^s (\eta \partial_e u)(x) \geq - C\|u\|_{L^1_s(\mathbb{R}^n)},
$$
where we have also used that since $x \in \{v>0 \}$, then $\partial_e u(x) \not = 0$. As a consequence, it follows that $Du(x) \not = 0$ and so $x \in \Omega \cap B_1$. If $x \in \{v=0 \}$, then it is straightforward to see that
$$
    \Delta^s v(x) = \int_{\mathbb{R}^n}\frac{v(y)}{|y-x|^{n+2s}}dy \geq 0. 
$$
In any case, it holds
$$
    \Delta^s v \geq -C\|u\|_{L^1_s(\mathbb{R}^n)} \quad \mbox{in }\; B_{1/2}.
$$
\end{proof}

Now, we present the gradient improvement of oscillation estimates. For simplicity, we assume throughout this section that \(u\) is a solution to \eqref{mainprob} satisfying \(u(0) = 0\). 

\begin{lemma}\label{reg-at-critical-points-step1} 
Assume $u$ satisfies $\|Du\|_{L^\infty(B_{1/2})} \leq 1$ and
$$
    |u(x)|\leq \max\{1,|x|^{1+\alpha_1}\} \quad \mbox{in} \quad \mathbb{R}^n.
$$
Given $\mu, \delta \in (0,1)$, there exist positive parameters $\mu_\star$ and $r_\star$ depending only on $n$, $s$, $\mu$ and $\delta$, such that the following holds: for a given $e \in \mathbb{S}^{n-1}$, if
$$
    \left|\left\{x \in B_{r_\star} \colon Du(x) \cdot e \leq  \delta \right\} \right| > \mu |B_{r_\star}|,
$$
then
$$
    Du \cdot e \leq 1-\mu_\star \quad \mbox{in} \quad B_{r_\star/4}.
$$
\end{lemma}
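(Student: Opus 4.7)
\emph{Strategy.} The statement is a measure-to-pointwise oscillation step for $\partial_e u$, so the natural route is to use Lemma \ref{subsolution-lemma} to convert the measure hypothesis into a nonnegative, bounded subsolution of $\Delta^s(\cdot)\geq -C$ that vanishes on a large set, and then feed it into a nonlocal De Giorgi-type $L^\infty$ estimate. Lemma \ref{subsolution-lemma} was essentially designed for exactly this purpose.

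\emph{Building the subsolution.} I would apply Lemma \ref{subsolution-lemma} with its parameter $\mu$ equal to our $\delta$, after a routine modification allowing the cut-off $\eta$ to be compactly supported inside $B_{1/2}$ (so that the Lipschitz bound $\|Du\|_{L^\infty(B_{1/2})}\leq 1$ is available on $\operatorname{supp}\eta$). This yields
\[
v := \eta(\partial_e u - \delta)_+, \qquad 0\le v\le 1-\delta \ \text{in }\mathbb{R}^n, \qquad \Delta^s v \ge -C_0,
\]
with $C_0$ controlled by $\|u\|_{L^1_s(\mathbb{R}^n)}$ and hence universal via the growth hypothesis and the tail bound \eqref{twohours}. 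The measure hypothesis then reads $v\equiv 0$ on a $\mu$-fraction of $B_{r_\star}$, provided $r_\star\leq 1/4$ so that $B_{r_\star}\subset \{\eta=1\}$.

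\emph{Rescaling and measure-to-pointwise.} Set $\tilde v(x) := v(r_\star x)/(1-\delta)$. Then $0\le \tilde v\le 1$ in $\mathbb{R}^n$, $|\{\tilde v=0\}\cap B_1|\ge \mu|B_1|$, and $\Delta^s \tilde v\ge -C_0 r_\star^{2s}/(1-\delta)$. Choosing $r_\star$ small, depending on $(n,s,\mu,\delta)$, one pushes the latter quantity below a threshold $\varepsilon_0(\mu)$ and applies the standard nonlocal measure-to-$L^\infty$ lemma: for subsolutions $\tilde v\le 1$ of $\Delta^s\tilde v\ge-\varepsilon$ in $B_1$ vanishing on a $\mu$-fraction of $B_1$, one has $\sup_{B_{1/4}}\tilde v\le 1-\theta(n,s,\mu)$. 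Undoing the rescaling gives $\partial_e u\le 1-\theta(1-\delta)$ in $B_{r_\star/4}$, so $\mu_\star:=\theta(1-\delta)$ closes the loop.

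\emph{Main obstacle.} The genuinely nonlocal point is the measure-to-$L^\infty$ lemma in step three: the tails of $\tilde v$ in $\mathbb{R}^n\setminus B_1$ enter $\Delta^s \tilde v$ and must be held in check uniformly, which is exactly what the growth hypothesis \eqref{inftycond} and the resulting uniform tail bound \eqref{twohours} are tailored to provide. The only other fiddly point is reworking Lemma \ref{subsolution-lemma} for a cut-off compactly supported in $B_{1/2}$, but this is immediate since its proof only requires $x$ to sit in the interior of $\operatorname{supp}\eta$ by a definite amount.
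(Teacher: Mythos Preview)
Your proposal is correct and follows essentially the same route as the paper: build a nonnegative subsolution via Lemma \ref{subsolution-lemma} with parameter $\delta$, use the growth hypothesis through \eqref{twohours} to make the right-hand side universal, and then invoke a De Giorgi--type measure-to-pointwise estimate at scale $r_\star$. The only cosmetic differences are that the paper flips to the supersolution $\overline{w}=(1-\delta-w)_+$ and applies the $L^\epsilon$/weak Harnack estimate \cite[Theorem 10.4]{CS09} directly on $B_{r_\star}$, whereas you rescale the subsolution and quote the equivalent oscillation-decrease lemma; and your shrinking of $\operatorname{supp}\eta$ into $B_{1/2}$ is a clean way to secure the global bound $0\le v\le 1-\delta$ that the paper uses more implicitly.
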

\begin{proof}
Let \(\eta\) be as specified in the assumptions of Lemma \ref{subsolution-lemma}. Defining \( w \coloneqq (Du \cdot e - \delta)_+ \), we apply the latter Lemma and conclude that \( w \) is a solution to
$$
    \Delta^s \left(\eta w\right) \geq -C_1\|u\|_{L^1_s(\mathbb{R}^n)} \geq -C_0  \quad \mbox{in} \quad B_{1/2}.
$$
In the latter inequality, we follow \eqref{twohours}, for $C_0$ a constant depending only on $n$ and $s$. 
 
Thanks to $\|Du\|_{L^\infty(B_{1/2})} \leq 1$, we observe that $w \leq 1 - \delta$ in $B_{1/2}$. This implies that function $\overline{w} \coloneqq (1-\delta - w)_+$ satisfies
$$
     \Delta^s \left(\eta \overline{w}\right) \leq C_0 \quad \mbox{in} \quad B_{1/2}.
$$
Moreover, 
$$
    \left|\left\{x \in B_{r_\star}\colon \overline{w} \geq 1-\delta   \right\}\right| > \mu\left|B_{r_\star}\right|.
$$
We now observe that for any $y \in B_{r_\star/4}$, the inclusion $B_{r_\star} \subset B_{3r_\star/2}(y)$ holds. Consequently, Lemma~\ref{lemma:CS09} can be applied to yield
\begin{eqnarray*}
    \mu\,|B_{r_\star}| &< & \left|\left\{x \in B_{r_\star}\colon \overline{w} \geq 1-\delta   \right\}\right|\\
    & \leq & \left|\left\{x \in B_{3r_\star/2}(y) \colon \overline{w} \geq 1-\delta   \right\}\right|\\
    & \leq & Cr_{\star}^n(\overline{w}(x) + C_0r_{\star}^{2s})^\epsilon\, (1-\delta)^{-\epsilon},
\end{eqnarray*}
for $y \in B_{r_\star/4}$. Rearranging terms,
$$
    c(1-\delta)\,\mu^{\frac{1}{\epsilon}} \leq \overline{w}(y) + r_\star^{2s} C_0,
$$
for some $c>0$. Next, we take $r_\star$ small depending on $\delta$, $\mu$, $c$ and $C_0$, so that
$$
    \frac{c(1-\delta)\,\mu^{\frac{1}{\epsilon}}}{2} \leq \overline{w} \quad B_{r_\star/4}.
$$
Finally, by the definition of $\overline{w}$, we conclude that
$$
    Du\cdot e \leq  1-\mu_\star \quad \mbox{in} \quad B_{r_\star/4},
$$
for $\mu_\star \coloneqq 2^{-1}c\mu^{\frac{1}{\epsilon}}(1-\delta)$.
\end{proof}

We now apply an iterative method to establish gradient control within dyadic balls. Unlike local cases, special care is needed to ensure that the growth at infinity for rescaled functions is maintained, which evidences the nonlocal influence in the argument. For notational simplicity, let us define \( I_k \coloneqq \{0, 1, \dots, k\} \).

\begin{proposition}\label{reg-at-critical-points} 
Assume $u$ satisfies $\|Du\|_{L^\infty(B_{1/2})} \leq 1$ and
$$
    |u(x)|\leq \max\{1,|x|^{1+\alpha_1}\} \quad \mbox{in} \quad \mathbb{R}^n.
$$
Given $\mu, \delta \in (0,1)$, there exist positive parameters $r_\star$, $\lambda$ and $\alpha$ depending only on $n$, $s$, $\mu$ and $\delta$, such that the following holds: given $k>0$ integer, assume that
\begin{equation}\label{positivity every direction}
    \inf_{e \in \mathbb{S}^{n-1}}\left|\left\{x \in B_{r_\star\lambda^i}\colon Du(x) \cdot e \leq \delta \lambda^{\alpha i}  \right\} \right| > \mu |B_{r_\star \lambda^i}|,
\end{equation}
for each $i \in I_k$, then
\begin{equation}\label{boladouro}  \|Du\|_{L^\infty(B_{r_\star \lambda^{i+1}})} \leq \lambda^{\alpha(i+1)}.
\end{equation}
\end{proposition}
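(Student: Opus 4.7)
The plan is induction on $k$. \textbf{Base case} ($k=0$): the positivity hypothesis at $i=0$ is precisely the input of Lemma~\ref{reg-at-critical-points-step1} with parameters $(\mu,\delta)$. Since the quantifier is over all $e\in\mathbb{S}^{n-1}$, applying the lemma to both $e$ and $-e$ yields $|Du\cdot e|\le 1-\mu_\star$ in $B_{r_\star/4}$ for every direction, hence $\|Du\|_{L^\infty(B_{r_\star/4})}\le 1-\mu_\star$. I fix $\lambda\in(0,1/4]$ and choose $\alpha>0$ by the relation $\lambda^{\alpha}=1-\mu_\star$; the inclusion $B_{r_\star\lambda}\subset B_{r_\star/4}$ then gives \eqref{boladouro} at $i=0$.

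\textbf{Inductive step}: assuming \eqref{boladouro} holds for $0,1,\ldots,k-1$, so in particular $\|Du\|_{L^\infty(B_{r_\star\lambda^k})}\le\lambda^{\alpha k}$, I introduce the rescaling
$$
v(y):=\lambda^{-(1+\alpha)k}\,u(\lambda^{k}y).
$$
One then verifies four facts. First, the $2s$-homogeneity of $(-\Delta)^s$ makes $v$ a solution of \eqref{mainprob} with respect to $\widetilde\Omega:=\lambda^{-k}\Omega$. Second, $Dv(y)=\lambda^{-\alpha k}\,Du(\lambda^{k}y)$, so the inductive bound yields $\|Dv\|_{L^\infty(B_{r_\star})}\le 1$. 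Third, the change of variables $x=\lambda^{k}y$ converts the $i=k$ positivity hypothesis into $|\{y\in B_{r_\star}\colon Dv(y)\cdot e\le\delta\}|>\mu|B_{r_\star}|$ for every $e$. Fourth, $v$ satisfies the growth $|v(y)|\le\max\{1,|y|^{1+\alpha_1}\}$ on $\mathbb{R}^{n}$ (this is the delicate point, discussed below). Granting the fourth fact, Lemma~\ref{reg-at-critical-points-step1} applied to $v$ in directions $\pm e$ gives $\|Dv\|_{L^\infty(B_{r_\star/4})}\le 1-\mu_\star=\lambda^{\alpha}$; unscaling yields $\|Du\|_{L^\infty(B_{r_\star\lambda^{k}/4})}\le\lambda^{\alpha(k+1)}$, and since $r_\star\lambda^{k+1}\le r_\star\lambda^{k}/4$ this closes the induction.

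\textbf{Main obstacle}: verifying the growth of $v$. The blow-up factor $\lambda^{-(1+\alpha)k}$ prevents a naive transfer of the hypothesis $|u(x)|\le\max\{1,|x|^{1+\alpha_1}\}$. I plan to split $\mathbb{R}^{n}$ into the dyadic pieces $\{|\lambda^{k}y|\in[r_\star\lambda^{i},r_\star\lambda^{i-1}]\}$ for $1\le i\le k$, on each of which the telescoped inductive gradient bounds integrated from $u(0)=0$ produce $|v(y)|\lesssim|y|^{1+\alpha}$ after absorbing the scaling factors. On the far region $|\lambda^{k}y|\ge 1$ the hypothesis transfers to $|v(y)|\le\lambda^{k(\alpha_1-\alpha)}|y|^{1+\alpha_1}\le|y|^{1+\alpha_1}$, using $\alpha<\alpha_1$ and $\lambda\le 1$. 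Matching these estimates against $\max\{1,|y|^{1+\alpha_1}\}$ fixes the parameter choices: $\alpha<\alpha_1<2s-1$ is forced so that $\lambda^{k(\alpha_1-\alpha)}\le 1$, and $\lambda$ must be taken small enough (essentially $\lambda\le 2^{-(1+\alpha_1)/(\alpha_1-\alpha)}$) to swallow the universal constants produced by the telescoping. A secondary technical point is that Lemma~\ref{reg-at-critical-points-step1} formally requires $\|Dv\|_{L^\infty(B_{1/2})}\le 1$ whereas the induction only delivers this on $B_{r_\star}$; inspecting its proof, however, the $L^\infty$ bound is used only to clamp the auxiliary $w=(Dv\cdot e-\delta)_+$ inside the cut-off $\eta$, and the tail is controlled by the growth just verified, so the weaker local bound suffices.
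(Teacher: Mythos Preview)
Your strategy---induction via rescaling, with $\alpha$ chosen so that $\lambda^\alpha=1-\mu_\star$, and Lemma~\ref{reg-at-critical-points-step1} applied at each step---is exactly the paper's. The difference is organizational, and the paper's organization resolves both obstacles you flag more cleanly.

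Rather than jumping directly to $v(y)=\lambda^{-(1+\alpha)k}u(\lambda^k y)$ and telescoping all previous gradient bounds to recover the growth, the paper runs a one-step recursion $v_{i+1}(x)=\tfrac{v_i(\lambda x)}{\lambda(1-\mu_\star)}$ and inducts on the \emph{full hypothesis set} for $v_i$: namely $v_i(0)=0$, $\|Dv_i\|_{L^\infty(B_{1/2})}\le 1$, and $|v_i(x)|\le\max\{1,|x|^{1+\alpha_1}\}$. The growth for $v_{i+1}$ then needs only a single scaling: on $B_{\lambda^{-1}r_\star/4}$ it follows from the gradient improvement $\|Dv_i\|_{L^\infty(B_{r_\star/4})}\le\lambda^\alpha$ just obtained, and on the complement it follows from the growth of $v_i$ alone, the mismatch being absorbed by the single factor $\lambda^{\alpha_1-\alpha}(4/r_\star)^{1+\alpha_1}$, which is made $\le 1$ by choosing $\lambda$ small and $\alpha\le\alpha_1/2$. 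No telescoping is needed.

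More importantly, this step-by-step scheme also delivers $\|Dv_{i+1}\|_{L^\infty(B_{1/2})}\le 1$ directly, since $B_{1/2}\subset B_{\lambda^{-1}r_\star/4}$ once $\lambda\le r_\star/2$. Your inductive hypothesis---only the bounds \eqref{boladouro} on $u$---is too weak to recover this: from $\|Du\|_{L^\infty(B_{r_\star\lambda^j})}\le\lambda^{\alpha j}$ for $j\le k$ you only get $\|Dv\|_{L^\infty(B_{r_\star})}\le 1$, and on $B_{1/2}\setminus B_{r_\star}$ the best you obtain is $|Dv|\le\lambda^{-\alpha}>1$. Your workaround (``inspecting the proof, the weaker local bound suffices'') is not quite right: in the proof of Lemma~\ref{reg-at-critical-points-step1} the bound $w\le 1-\delta$ in $B_{1/2}$ is what makes $\overline w=(1-\delta-w)_+$ equal to $1-\delta-w$ there, and at points where $w>1-\delta$ one has $\eta\overline w=0$, so $\Delta^s(\eta\overline w)\ge 0$ with no a priori upper bound---the supersolution inequality can fail. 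The fix is not to weaken the lemma but to strengthen the inductive hypothesis, which is precisely what the paper's recursion does.
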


\begin{proof}
Initially, we consider $\lambda>0$ small enough satisfying
\begin{equation}\label{smallness regime of lambda in reg at criti points}
    \max\left(\lambda\, \frac{2}{r_\star}, \lambda^{\alpha_1/2}\left(\frac{4}{r_\star}\right)^{1+\alpha_1}\right) \leq 1.
\end{equation}
For $\mu_\star$ as in Lemma \ref{reg-at-critical-points-step1}, we consider 
$$
    v_{i+1}(x) \coloneqq \frac{v_i(\lambda x)}{\lambda (1-\mu_\star)},
$$
for each $i \in I_k$, where $v_0 = u$.  

Next, we claim that if $v_j(0) = 0$, $\|Dv_j\|_{L^\infty(B_{1/2})} \leq 1$, and 
\begin{equation}\label{growth for v_i}
|v_j(x)|\leq \max\{1,|x|^{1+\alpha_1}\} \quad \mbox{in} \quad \mathbb{R}^n, 
\end{equation}
holds for $j=i \leq k$, then the same holds for $j=i+1$. Indeed, we easily have $v_{i+1}(0) = 0$. Considering 
\begin{equation}\label{triestekaust}
\alpha \coloneqq \frac{\ln(1-\mu_\star)}{\ln(\lambda)},
\end{equation}
observe that 
$$
    v_i(x) = \frac{u(\lambda^i x)}{\lambda^{i(1+\alpha)}}.
$$
By further adjusting $\lambda$, we may assume $\alpha \leq \alpha_1/2$. Additionally, note that $v_i$ solves \eqref{mainprob}. By assumption \eqref{positivity every direction}, we notice that
$$
    \inf_{e \in \mathbb{S}^{n-1}}\left|\left\{x \in B_{r_\star}\colon Dv_i(x) \cdot e \leq \delta  \right\} \right| > \mu |B_{r_\star}|.
$$
From the choice in \eqref{triestekaust}, Lemma \ref{reg-at-critical-points-step1} applied to $v_i$ yields
\begin{equation}\label{osc_improv_for_v_i}
\|Dv_i\|_{L^\infty\left(B_{r_\star/4} \right)} \leq 1-\mu_\star = \lambda^\alpha. 
\end{equation}
Hence, we have that the estimate above implies
$$
\|Dv_{i+1}\|_{L^\infty\left(B_{1/2}\right)} \leq \|Dv_{i+1}\|_{L^\infty\left(B_{\lambda^{-1}\frac{r_\star }{4}}\right)} \leq 1.
$$
Finally, we shall conclude estimate \eqref{growth for v_i}. From the latter estimate, we use that $v_{i+1}(0)=0$ to obtain 
$$
    |v_{i+1}(x)| \leq \max\left\{1,|x|^{1+\alpha_1} \right\}, \quad \mbox{for} \quad x \in B_{\lambda^{-1}\frac{r_\star }{4}}.
$$
For the complementary set $\mathbb{R}^n \backslash B_{\lambda^{-1}\frac{r_\star }{4}}$, we use \eqref{growth for v_i} for $j=i$, to derive
\begin{eqnarray*}
|v_{i+1}(x)| & = & \lambda^{-(1+\alpha)}|v_i(\lambda x)|\\
& \leq & \lambda^{-(1+\alpha)}\max\left\{1, \lambda^{1+\alpha_1}|x|^{1+\alpha_1} \right\}\\
& = & \lambda^{\alpha_1 - \alpha}\max\left\{\lambda^{-(1+\alpha_1)} ,|x|^{1+\alpha_1} \right\}
\end{eqnarray*}
and so,
\begin{equation}\nonumber
|v_{i+1}(x)| \leq \lambda^{\alpha_1 - \alpha}\left(\frac{4}{r_\star}\right)^{1+\alpha_1} |x|^{1+\alpha_1}.
\end{equation}
Using that $\alpha_1 - \alpha \geq \alpha_1/2$, and inequality \eqref{smallness regime of lambda in reg at criti points}, we get 
$$
|v_{i+1}(x)| \leq \max\left\{1,|x|^{1+\alpha_1} \right\},
$$
as claimed.

Finally, we apply the claim recursively for each $i \in I_k$, where \eqref{osc_improv_for_v_i} gives
$$
    \|Du\|_{L^\infty\left(B_{r_\star \lambda^{i+1}}\right)} \leq \lambda^{\alpha(i+1)}.
$$
\end{proof}

\section{Oscillation estimates}\label{smallperturb-section}

In this section, we establish gradient oscillation estimates for solutions under small flatness assumptions. 

\begin{proposition}\label{small pert argument}
Let $u$ be a solution to \eqref{mainprob} satisfying
$$
    |u(x)| \leq \max\{1, |x|^{1+\alpha_1}\}, \quad \text{for} \quad x \in \mathbb{R}^n.
$$
There exist parameters $\lambda_\star$ and $C$, depending only on $n$ and $s$, such that the following holds: suppose there exists an affine function 
$\ell(x) \coloneqq a + \xi \cdot x$, with $\xi \in \mathbb{S}^{n-1}$, such that
\begin{equation}\label{proximity}
    \|u - \ell\|_{L^\infty(B_1)} \leq \lambda_\star^2.
\end{equation}
Then, we have
$$
    |Du(x) - Du(0)| \leq C |x|^{2s-1}, \quad \mbox{for} \quad x \in B_{1/2}.
$$
\end{proposition}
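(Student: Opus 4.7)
The plan is to exploit the $L^\infty$-proximity $\|u-\ell\|_{L^\infty(B_1)}\leq \lambda_\star^2$ to force $|Du|>0$ throughout a ball of universal radius around $0$; once this is achieved, $u$ is truly $s$-harmonic there, and the desired $C^{1,2s-1}$ estimate follows from classical interior regularity for $s$-harmonic functions (note $2s-1>0$ since $s>1/2$).

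The central step is a change of variables that preserves the growth class required by Proposition \ref{ISHII-LIONS-QUE-N-VAMOS-FAZER}. Setting $r := \lambda_\star^{2/(1+\alpha_1)}$ and
\[
v(y) := \frac{u(ry)-\ell(ry)}{r^{1+\alpha_1}},
\]
the identity $\Delta^s\ell\equiv 0$, valid in the principal-value sense for $s>1/2$, makes $v$ a viscosity solution of $\Delta^s v=0$ in $\{|r^{\alpha_1} Dv + \xi|\neq 0\}\cap B_{1/r}$. Rewriting the flatness as $\|u-\ell\|_{L^\infty(B_1)}\leq r^{1+\alpha_1}$ gives $|v|\leq 1$ on $B_{1/r}$, while combining $|u(x)|\leq \max\{1,|x|^{1+\alpha_1}\}$ with $|\ell(x)|\leq 2+|x|$ (using $|a|\leq |u(0)|+\lambda_\star^2\leq 2$) yields, for $|y|\geq 1/r$, the tail estimate $|v(y)|\leq C_0|y|^{1+\alpha_1}$, with $C_0=C_0(n,s,\alpha_1)$. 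The condition $\alpha_1<2s-1$ from the preliminaries is what ensures the ensuing tail is integrable.

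Applying Proposition \ref{ISHII-LIONS-QUE-N-VAMOS-FAZER} to $v/C_0$, whose shifted coefficient $C_0 r^{\alpha_1}$ is universally small for $\lambda_\star$ small, I obtain $\|Dv\|_{L^\infty(B_{3/4})}\leq C_0\Lambda$, hence after unscaling
\[
\|Du-\xi\|_{L^\infty(B_{3r/4})}\leq C_0\Lambda\, r^{\alpha_1}.
\]
A final smallness choice of $\lambda_\star$ renders the right-hand side at most $1/2$, so $|Du|\geq 1/2$ on $B_{3r/4}$, and consequently $\Delta^s u = 0$ classically throughout $B_{3r/4}$. Invoking the standard interior $C^{1,2s-1}$ regularity for $s$-harmonic functions with controlled tail (see, e.g., \cite{SILV06}) then yields $|Du(x)-Du(0)|\leq C_1\, r^{-2s}|x|^{2s-1}$ on $B_{3r/8}$; on the annulus $B_{1/2}\setminus B_{3r/8}$ the same type of bound follows from the Lipschitz estimate of Proposition \ref{LIPS-VEI-DE-GUERRA} together with $|x|^{2s-1}\geq (3r/8)^{2s-1}$. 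Since $r$ is a universal constant, all constants consolidate into a single universal $C$.

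The main obstacle is the rescaling. A naive choice $v=(u-\ell)/\lambda_\star$ would produce the small shifted coefficient $\nu=\lambda_\star$ and preserve the equation, but the linear tail of $\ell$, amplified by $1/\lambda_\star$, exceeds the admissible growth class of Proposition \ref{ISHII-LIONS-QUE-N-VAMOS-FAZER}—a genuinely nonlocal obstruction with no local counterpart. The exponent $2/(1+\alpha_1)$ is selected precisely so that the spatial scaling absorbs this overflow while simultaneously keeping the shifted coefficient $r^{\alpha_1}$ small.
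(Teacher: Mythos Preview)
Your proof is correct and follows essentially the same route as the paper's: rescale $u-\ell$ so that the resulting function lands in the growth class of Proposition~\ref{ISHII-LIONS-QUE-N-VAMOS-FAZER} with a small shifted coefficient, deduce $|Du|>0$ on a ball of universal radius, then invoke interior $C^{1,2s-1}$ regularity for $s$-harmonic functions and finish with the Lipschitz bound on the outer annulus. The only differences are bookkeeping---the paper scales by $\lambda_\star$ and normalizes by $\lambda_\star^{1+\beta}$ with an auxiliary $\beta<\alpha_1$ (so the tail factor $4\lambda_\star^{\alpha_1-\beta}$ can be made $\leq 1$ directly), whereas you make the critical choice $r^{1+\alpha_1}=\lambda_\star^2$ and absorb the resulting tail constant $C_0$ by passing to $v/C_0$; and the paper applies the interior estimate to the rescaled function on $B_{3/4}$ before scaling back, while you return to $u$ on $B_{3r/4}$ first.
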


\begin{proof}
For $\beta < \alpha_1$, define
$$
    w(x) \coloneqq \frac{[u-\ell](\lambda_\star x)}{\lambda_\star^{1+\beta}}.
$$
for $\lambda_\star$ to be chosen later. First, we concentrate our analysis to show that 
$$
   |w(x)| \leq \max\left\{1,|x|^{1+\alpha_1}\right\} \quad \mbox{in} \quad \mathbb{R}^n. 
$$ 
Indeed, by \eqref{proximity} we easily have $|w(x)| \leq 1$ in $B_{\lambda_\star^{-1}}$. For $|x| > \lambda_\star^{-1}$, we obtain
\begin{eqnarray*}
    |w(x)| & \leq & \lambda_\star^{-(1+\beta)}\left(|u(\lambda_\star x)| + |a| + |\lambda_\star x|\right)\\
           & \leq & \lambda_\star^{-(1+\beta)}\left(\max\{1,\lambda_\star^{1+\alpha_1}|x|^{1+\alpha_1}\} + 2 + \lambda_\star|x|\right)\\
           & \leq & \lambda_\star^{\alpha_1 - \beta}\left(\max\left\{\lambda_\star^{-(1+\alpha_1)},|x|^{1+\alpha_1} \right\} + 2\lambda_\star^{-(1+\alpha_1)} + \lambda_\star^{-\alpha_1}|x|\right)\\
           & \leq & 4\lambda_\star^{\alpha_1 - \beta}|x|^{1+\alpha_1}.
\end{eqnarray*}
Assuming $4 \lambda_\star^{\alpha_1 - \beta} \leq 1$, it implies that $|w(x)| \leq |x|^{1+\alpha_1}$ for $x \in \mathbb{R}^n \backslash B_{ \lambda_\star^{-1}}$.

Secondly, we claim that
$$
   B_{3/4} \subseteq \{x \in B_{3/4} \colon \xi + \lambda_\star^\beta Dw(x) \not = 0 \}. 
$$
Indeed, since $w$ promptly satisfies \eqref{shifteq}, we take $\lambda_\star^\beta$ even smaller, and apply Proposition \ref{ISHII-LIONS-QUE-N-VAMOS-FAZER}, deriving 
$$
    \|Dw\|_{L^\infty(B_{3/4})} \leq \Lambda,
$$
for some universal $\Lambda>0$. Hence, we observe that
$$
    |\xi + \lambda_\star^\beta Dw(x)| \geq 1 - \lambda_\star^\beta \Lambda \geq \frac{1}{2},
$$
for each $x \in B_{3/4}$.

In view of this, $w$ is $s$-harmonic in $B_{3/4}$. By classical gradient regularity estimates
\begin{equation}\label{eq:estimate}
    |Dw(x) - Dw(0)| \leq C|x|^{2s-1} \quad \mbox{for} \quad x \in B_{1/2},
\end{equation}
for some $C>0$ depending only on $n$ and $s$. Scaling back $w$ to $u$, we have
$$
    |Du(x) - Du(0)| \leq C|x|^{2s-1} \quad \mbox{for} \quad x \in B_{\lambda_\star/2}.
$$
Finally, for $x \in B_{1/2} \backslash B_{\lambda_\star/2}$, we conclude that
\begin{eqnarray*}
    |Du(x) - Du(0)| \leq  \dfrac{4}{\lambda_\star}\|Du\|_{L^\infty(B_{1/2})}|x|^{2s-1}.
\end{eqnarray*}
\end{proof}

Before proceeding with the proof of the main theorem, we conclude this section by showing that, in a neighborhood of a nondegenerate point, a given Lipschitz function is close to an affine function with a unit slope. This result follows from a standard argument using Sobolev's inequality.

\begin{lemma}\label{proximity with affine functions}
Let $v \in W^{1,\infty}(B_2)$ such that $v(0)=0$ and $\|Dv\|_{L^\infty(B_2)} \leq 1$. Given $\varepsilon>0$, there exist $\mu$ and $\delta$ depending on $\varepsilon$ and $n$, such that if
$$
    \left|\left\{x \in B_{1} \colon Dv(x) \cdot e \leq \delta \right\} \right| \leq \mu |B_{1}|,
$$
for some $e \in \mathbb{S}^{n-1}$, then there exist $a \in [-1,1]$ and $\xi \in \mathbb{S}^{n-1}$, such that
$$
    \|v - \ell\|_{L^\infty(B_1)} \leq \varepsilon,
$$
for $\ell(x) \coloneqq a + \xi \cdot x$.
\end{lemma}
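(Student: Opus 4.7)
The plan is to take $\xi := e$ (already a unit vector by hypothesis) and $a := v(0) = 0$, so the candidate affine function is $\ell(x) := e\cdot x$; both $\xi\in\mathbb{S}^{n-1}$ and $a\in[-1,1]$ then hold trivially. The task reduces to proving $\|v-\ell\|_{L^\infty(B_1)}\leq \varepsilon$. Since $w := v-\ell$ lies in $W^{1,\infty}(B_2)$ with $w(0)=0$, I would invoke the Morrey embedding $W^{1,p}(B_1)\hookrightarrow C^{0,1-n/p}(\overline{B_1})$ for a fixed exponent $p>n$ (say $p=n+1$): this gives $[w]_{C^{0,\gamma}(B_1)}\leq C_n\|Dw\|_{L^p(B_1)}$ with $\gamma=1/(n+1)$, hence
\[
\|v-\ell\|_{L^\infty(B_1)} = \|w\|_{L^\infty(B_1)} \leq C_n\,\|Dv-e\|_{L^{n+1}(B_1)},
\]
reducing the proof to an $L^p$-control of $Dv-e$.

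The core observation is that on the good set $A:=\{x\in B_1 : Dv(x)\cdot e>\delta\}$, the expansion
\[
|Dv(x)-e|^2 = |Dv(x)|^2 - 2\,Dv(x)\cdot e + 1 \leq 2(1-\delta)
\]
follows from $|Dv|\leq 1$. On the bad set $B_1\setminus A$, which by hypothesis has measure at most $\mu|B_1|$, the trivial estimate $|Dv-e|\leq |Dv|+1\leq 2$ applies. Splitting the integral accordingly yields
\[
\|Dv-e\|_{L^{n+1}(B_1)}^{n+1} \leq |B_1|\bigl(2(1-\delta)\bigr)^{(n+1)/2} + \mu\,|B_1|\,2^{n+1}.
\]

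To close the argument, I would first choose $\delta\in(0,1)$ close to $1$, depending only on $n$ and $\varepsilon$, so that the first summand is controlled by $(\varepsilon/(2C_n))^{n+1}$, and then pick $\mu$ small (depending on the same parameters) so that the second summand is likewise bounded by $(\varepsilon/(2C_n))^{n+1}$. Combined with the Morrey estimate this produces $\|v-\ell\|_{L^\infty(B_1)}\leq \varepsilon$, as desired.

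The only real subtlety is that the hypothesis $Dv\cdot e>\delta$ on most of $B_1$ is a \emph{one-sided} bound, whereas to approximate $v$ by an affine function with unit slope $e$ one needs $Dv\approx e$ in a symmetric sense. The point where these are reconciled is precisely the identity $|Dv-e|^2=|Dv|^2-2Dv\cdot e+1$ together with the universal Lipschitz constraint $|Dv|\leq 1$: this is what allows a one-sided inequality with $\delta$ close to $1$ to upgrade automatically to two-sided $L^p$-proximity. Apart from this interplay, the argument is entirely standard.
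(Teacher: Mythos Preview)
Your proof is correct and follows the same route as the paper's: both take $\xi = e$, bound $\|v-\ell\|_{L^\infty(B_1)}$ by $\|Dv-e\|_{L^p(B_1)}$ via a Sobolev/Morrey inequality (the paper uses $p=2n$, you use $p=n+1$), and then split the integral over the good and bad sets using the key identity $|Dv-e|^2 = |Dv|^2 - 2\,Dv\cdot e + 1 \leq 2(1-\delta)$ where $Dv\cdot e > \delta$. The only cosmetic difference is that the paper takes $a$ to be the average of $v$ over $B_1$ (and checks $a\in[-1,1]$ at the end), whereas your choice $a=v(0)=0$ makes this constraint automatic.
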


\begin{proof}
Let
$$
    a = \frac{1}{|B_1|}\int_{B_1}v.
$$
From Sobolev's inequality, 
\begin{eqnarray*}
    \left|v(x) - (a + e\cdot x)\right|^{2n} & \leq & C(n)\int_{B_1}|Dv(x) - e|^{2n}dx. 
\end{eqnarray*}
For simplicity, define 
$$
\mathcal{A} = \left\{x \in B_1 \colon Dv(x) \cdot e \leq \delta \right\}.
$$
By assumption, we have $|\mathcal{A}| \leq \mu |B_1|$, and so
\begin{eqnarray*}
    \int_{B_1}|Dv(x) - e|^{2n}dx & = & \int_{\mathcal{A} }|Dv(x) - e|^{2n}dx + \int_{B_1 \backslash \mathcal{A} }|Dv(x) - e|^{2n}dx\\
                    & \leq & 4^n|\mathcal{A}| + 4^n|B_1|(1-\delta)^{2n}\\
                    & \leq &  C(n)(\mu + (1-\delta)^{2n}) \leq \varepsilon^{2n},
\end{eqnarray*}
provided $\delta$ and $\mu$ are carefully chosen. Hence, it follows that
$$
    \|v - (a + e \cdot x)\|_{L^\infty(B_1)} \leq \varepsilon.
$$
In addition, since $v(0)=0$ and $\|Dv\|_{L^\infty(B_2)} \leq 1$, it implies that $a \in [-1,1]$.
\end{proof}

\section{Gradient regularity estimates}\label{proof-section}

In this section, we build on the results from Sections \ref{pos-arg-section} and \ref{smallperturb-section} to prove Theorem \ref{main theorem}, applying a dichotomy argument that considers possible degeneracy contexts. 

\begin{proof}[Proof of Theorem \ref{main theorem}]
For $K \coloneqq 2\|u\|_{L^\infty(\mathbb{R}^n)} + \|Du\|_{L^\infty(B_{3/4})}$ and $x_0 \in B_{1/2}$, denote
$$
    v(x) \coloneqq \frac{u(x_0+4^{-1}x) - u(x_0)}{K}.
$$
Let \(\lambda_\star\) be as defined in Proposition \ref{small pert argument}. Set \(\varepsilon = \lambda_\star^2\) in the assumptions of Lemma \ref{proximity with affine functions}, and let \(\mu\) and \(\delta\) be the corresponding parameters from that result. By applying \(\mu\) and \(\delta\) in Proposition \ref{reg-at-critical-points}, we consider parameters \(r_\star\), \(\lambda\), and \(\alpha\). 

For each nonnegative integer $i$, let
$$
    \mathcal{A}(i) \coloneqq \inf_{e \in \mathbb{S}^{n-1}}\left|\left\{x \in B_{r_\star\lambda^i}\colon Dv(x) \cdot e \leq \delta \lambda^{\alpha i}  \right\} \right|,
$$
and define
\begin{equation}\label{positivity every direction main result}
    \iota \coloneqq \inf\left\{i \in \mathbb{N} \colon \mathcal{A}(i) \leq \mu |B_{r_\star \lambda^i}| \right\}.
\end{equation}

We analyze the proof into two cases.

\textit{Case $\iota = \infty$}. From Proposition \ref{reg-at-critical-points}, 
$$
    \|Dv\|_{L^\infty\left(B_{r_\star\lambda^{i+1}}\right)} \leq \lambda^{\alpha (i+1)}, \quad \mbox{for each} \quad i \in \mathbb{N}.
$$
In particular, this implies that $Dv(0) = 0$. Additionally, for each $x \in B_{r_\star \lambda}$, there exists $j=j(x) \in \mathbb{N}$ such that $r_\star \lambda^{j+1} \leq |x| \leq r_\star \lambda^{j}$. Hence, we obtain
\begin{equation}
    |Dv(x)| \leq \lambda^{\alpha j} \leq (r_\star \lambda)^{-\alpha}|x|^\alpha.
\end{equation}
For $x \in B_2 \backslash B_{r_\star \lambda}$, we use that $\|Dv\|_{L^\infty(B_2)} \leq 1$, to get
$$
    |Dv(x)| \leq 1 \leq (r_\star \lambda)^{-\alpha}|x|^\alpha.
$$
Therefore, 
$$
    |Dv(x)| \leq C|x|^{\alpha} \quad \mbox{for} \quad x \in B_2.
$$

\textit{Case $\iota < \infty$}. Immediately, for some $e \in \mathbb{S}^{n-1}$, we have
\begin{equation}
    \left|\left\{x \in B_1\colon Dw(x) \cdot e \leq \delta \right\} \right| \leq \mu |B_1|,
\end{equation}
provided
$$
    w(x) \coloneqq \frac{v(r_\star\lambda^{\iota}x)}{r_\star\lambda^{\iota(1+\alpha)}}.
$$
According the proof of Proposition \ref{reg-at-critical-points}, we derive that $\|Dw\|_{L^\infty(B_2)} \leq 1$ and
$$
    |w(x)| \leq \max\left\{1,|x|^{1+\alpha_1} \right\}, \quad \mbox{for} \quad x \in \mathbb{R}^n.
$$
In the sequel, by taking $\varepsilon = \lambda_\star^{2}$, we apply Lemma \ref{proximity with affine functions} for $w$, obtaining so
$$
    \|w - \ell\|_{L^\infty(B_1)} \leq \lambda_\star^{2},
$$
for some affine function $\ell$ with $|D\ell| = 1$. From Proposition \ref{small pert argument}, we obtain
$$
    |Dw(x) - Dw(0)| \leq C|x|^{2s-1} \quad \mbox{for} \quad x \in B_{1/2},
$$
and so,
\begin{equation*}
|Dv(y) - Dv(0)| \leq C|y|^\alpha \quad \mbox{for} \quad y \in B_{ r_\star \lambda^{\iota}}.
\end{equation*}
We use the fact that \eqref{positivity every direction main result} holds up to index $\iota - 1$, yielding
$$
    \|Dv\|_{L^\infty\left(B_{r_\star \lambda^j}\right)} \leq \lambda^{\alpha j}, \quad \mbox{for} \quad j = 0, \cdots, \iota.
$$
Consequently, for each $y \in B_{r_\star} \backslash B_{r_\star \lambda^\iota}$, 
$$
    |Dv(y) - Dv(0)| \leq 2\|Dv\|_{L^\infty(B_{r_\star \lambda^j})} \leq 2 \lambda^{\alpha j} \leq C|y|^\alpha,
$$
for $j=j(y) \in \{0,1,\cdots, \iota - 1\}$, satisfying $r_\star \lambda^{j+1} \leq |y| \leq r_\star \lambda^{j}$.

This provides the desired estimate in $B_{r_\star}$. Since $Dv$ is normalized, we argue as before to extend the estimate for $Dv$ up to $B_2$. From this, we obtain
$$
    |Du(z) - Du(x_0)| \leq C\left( \|u\|_{L^\infty(\mathbb{R}^n)} + \|Du\|_{L^\infty(B_{3/4})}\right),
$$
 for each $z \in B_{1/2}(x_0)$.
\end{proof}

Let us briefly explain how to extend Theorem \ref{main theorem} to the case with nonhomogeneous right-hand side and bounded solutions.

\begin{proposition}\label{nonhomg case}
Let $u \in C(B_1) \cap L^\infty(\mathbb{R}^n)$ be a viscosity solution to \eqref{introprob}, for some $s\in (1/2,1)$. Then, $u$ is locally $C^{1,\alpha}$, for some universal $\alpha\in (0,1)$, depending only on $n$ and $s$. Furthermore, there exists $C = C(n,s)$, such that
$$
    \|u\|_{C^{1,\alpha}(B_{1/2})} \leq C\,\left(\|u\|_{L^\infty(\mathbb{R}^n)} + \|f\|_{C^{0,1}(B_1)}\right).
$$   
\end{proposition}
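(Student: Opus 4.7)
The plan is to retrace the dichotomy argument from the proof of Theorem \ref{main theorem}, carefully tracking how the source $f$ scales through the normalization, the positivity argument, and the small-perturbation step. The assumption $s>1/2$ is precisely what provides the scaling budget $2s-1>0$ needed to absorb the nonhomogeneous term into universal constants at every iteration.

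First, I would normalize: for $x_0 \in B_{1/2}$, choose $K = C(\|u\|_{L^\infty(\mathbb{R}^n)} + \|f\|_{C^{0,1}(B_1)})$ so that $v(x) \coloneqq K^{-1}[u(x_0 + x/4) - u(x_0)]$ satisfies $\|v\|_{L^\infty(\mathbb{R}^n)} \leq 1$, has a Lipschitz bound in $B_{3/4}$ (given by the natural extension of Proposition \ref{LIPS-VEI-DE-GUERRA} to the nonhomogeneous case, which is the same Ishii-Lions argument with an additional $\|f\|_{L^\infty}$ contribution), and solves a rescaled free-boundary problem with source $f_v$ of $C^{0,1}$-norm at most $1$.

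Next, I would revisit Lemma \ref{subsolution-lemma} in the presence of a right-hand side. Working with discrete difference quotients $w^h(z) = h^{-1}[u(z+he)-u(z)]$, which in $\Omega \cap B_1$ satisfy $\Delta^s w^h = -f^h$ with $|f^h| \leq \|f\|_{C^{0,1}}$, the cut-off argument yields
$$
\Delta^s\bigl(\eta\,(\partial_e u - \mu)_+\bigr) \geq -C\bigl(\|u\|_{L^1_s(\mathbb{R}^n)} + \|f\|_{C^{0,1}(B_1)}\bigr) \quad \text{in } B_{1/2}.
$$
Since the weak $L^\varepsilon$-estimate of \cite[Theorem 10.4]{CS09} requires only an $L^\infty$ bound on the right-hand side, Lemma \ref{reg-at-critical-points-step1} then follows verbatim.

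The critical step is the iterative scaling in Proposition \ref{reg-at-critical-points}. Under the rescaling $v_i(x) = u(\lambda^i x)/\lambda^{i(1+\alpha)}$, the equation becomes $(-\Delta)^s v_i = \lambda^{i(2s-1-\alpha)} f(\lambda^i \cdot)$, whose $C^{0,1}$-norm is bounded by $\lambda^{i(2s-1-\alpha)}\|f\|_{C^{0,1}}$. Requiring $\alpha < 2s-1$ (tightening $\lambda$ if necessary), this norm decays geometrically, so the modified subsolution bound carries through to each $v_i$ with a uniform constant, closing the induction. Analogously, in Proposition \ref{small pert argument} the auxiliary function $w(x) = \lambda_\star^{-(1+\beta)}[u-\ell](\lambda_\star x)$ solves $(-\Delta)^s w = \lambda_\star^{2s-1-\beta} f(\lambda_\star \cdot)$ inside the non-degeneracy region $B_{3/4}$, with small Lipschitz source provided $\beta < 2s-1$. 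Classical interior $C^{1,2s-1}$ estimates for nonhomogeneous fractional Laplace equations with bounded Lipschitz right-hand side then deliver $|Dw(x)-Dw(0)| \leq C|x|^{2s-1}$ in $B_{1/2}$.

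Finally, I would assemble the dichotomy exactly as in the proof of Theorem \ref{main theorem}, using Lemma \ref{proximity with affine functions} to detect the affine profile in the nondegenerate alternative, and scaling back from $v$ to $u$ to conclude the stated estimate. The main obstacle is the uniformity of constants across the iterative scheme, which is resolved precisely by the geometric decay of $\|f_i\|_{C^{0,1}}$ enabled by the hypothesis $s > 1/2$.
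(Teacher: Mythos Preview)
Your proposal is correct and follows the same strategy the paper outlines: adapt Lemma~\ref{subsolution-lemma} so that the difference quotient satisfies $\Delta^s w^h = f_h$ with $|f_h| \leq \|Df\|_{L^\infty(B_1)}$, and then rerun the entire dichotomy program with this extra ingredient. The paper gives no detail beyond this and a reference to \cite{AST}; your explicit tracking of the source scaling $(-\Delta)^s v_i = \lambda^{i(2s-1-\alpha)} f(\lambda^i\,\cdot)$ and the resulting constraint $\alpha < 2s-1$ is precisely the mechanism needed to close the iteration uniformly.
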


Following the program developed here, the first step is to adapt Lemma \ref{subsolution-lemma}. The proof will be the same, except that the difference quotient $w^h$ will solve
$$
    \Delta^s w^h = f_h \coloneqq\frac{f(x+h)-f(x)}{h}.
$$
Lipschitz continuity of the right-hand side plays a role in controlling the $L^\infty$ size of $f_h$.

\begin{lemma}
Let $\eta \colon \mathbb{R}^n \to [0,1]$ be a smooth cut-off function satisfying
$$
    \eta = 1 \quad \mbox{in} \quad  B_1, \quad \mbox{and} \quad \eta = 0 \quad \mbox{in} \quad \mathbb{R}^n \backslash B_2.
$$
If $u$ is a solution to \eqref{introprob} and $e \in \mathbb{S}^{n-1}$, then $v = \eta\,(\partial_e u - \mu)_+$ solves
$$
    \Delta^s v \geq -C\left(\|u\|_{L^1_s(\mathbb{R}^n)} +\|Df\|_{L^\infty(B_1)} \right)\quad \mbox{in }\; B_{1/2},
$$
for any $\mu \in (0,1)$ and $C$ is a dimensional constant.
\end{lemma}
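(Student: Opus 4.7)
The plan is to mirror the proof of Lemma \ref{subsolution-lemma} step by step, tracking the extra contribution coming from $f$. First, I would fix $x \in \Omega \cap B_1$ (where $u$ is smooth by interior regularity for the fractional Laplacian), pick $e \in \mathbb{S}^{n-1}$, and set $w^h(z) \coloneqq h^{-1}(u(z+he) - u(z))$. For $|h|$ sufficiently small, translation invariance of the operator together with the equation $\Delta^s u = -f$ in $\Omega \cap B_1$ gives
$$
\Delta^s w^h(x) = -\frac{f(x+he) - f(x)}{h} \eqqcolon -f_h(x),
$$
and by the mean value theorem $\|f_h\|_{L^\infty(B_{3/4})} \leq \|Df\|_{L^\infty(B_1)}$ whenever $|h|$ is small.

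Next, exactly as in the homogeneous case, I would split $w^h = \eta w^h + (1-\eta) w^h \eqqcolon w_1 + w_2$, so that
$$
\Delta^s w_1(x) = \Delta^s w^h(x) - \Delta^s w_2(x) = -f_h(x) + F(x),
$$
where $F(x) \coloneqq -\Delta^s w_2(x)$ is the tail contribution. The estimate $|F(x)| \leq C\|u\|_{L^1_s(\mathbb{R}^n)}$ on $B_{1/2}$ is obtained verbatim from the change of variables and pointwise bound on $|Dg|$ carried out in Lemma \ref{subsolution-lemma}; none of that argument uses the equation, only the cutoff and the fact that $x \in B_{1/2}$ is separated from $\mathbb{R}^n \setminus B_1$. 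Combining the two estimates,
$$
|\Delta^s(\eta w^h)(x)| \leq C\bigl(\|u\|_{L^1_s(\mathbb{R}^n)} + \|Df\|_{L^\infty(B_1)}\bigr) \quad \text{in } \Omega \cap B_{1/2}.
$$
Letting $h \to 0$ and using smoothness of $u$ in $\Omega$, the quotient $w^h$ converges to $\partial_e u$, giving the same bound for $\Delta^s(\eta \partial_e u)$ on $\Omega \cap B_{1/2}$.

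Finally, to pass from $\eta \partial_e u$ to $v = \eta(\partial_e u - \mu)_+$, I would reproduce the dichotomy from the homogeneous lemma. At a point $x \in \{v > 0\} \cap B_{1/2}$, one has $\partial_e u(x) \neq 0$, hence $Du(x) \neq 0$, hence $x \in \Omega$, so the previous estimate applies; the inequality
$$
\Delta^s v(x) \geq \Delta^s(\eta \partial_e u)(x)
$$
follows because the integrand of $\Delta^s v$ dominates that of $\Delta^s(\eta \partial_e u)$ pointwise thanks to the factor $(1-\eta)\mu \geq 0$ and the positive-part truncation, just as in Lemma \ref{subsolution-lemma}. At points where $v(x) = 0$, one has $v \geq 0$ in $\mathbb{R}^n$, so $\Delta^s v(x) \geq 0$ trivially. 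In both cases the desired lower bound holds with $C$ depending only on $n$ and $s$.

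The only genuinely new ingredient compared with Lemma \ref{subsolution-lemma} is controlling the incremental quotient $f_h$, and the Lipschitz hypothesis $f \in C^{0,1}(B_1)$ is tailor-made for that; the rest is a bookkeeping exercise. I do not expect any serious obstacle — the argument is essentially identical, and the Lipschitz norm of $f$ enters additively, which is precisely what the statement claims.
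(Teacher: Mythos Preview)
Your proposal is correct and follows exactly the approach indicated in the paper: the paper itself does not give a detailed proof of this lemma, but simply remarks that the argument is identical to that of Lemma~\ref{subsolution-lemma}, with the sole modification that the difference quotient now satisfies $\Delta^s w^h = f_h$ and the Lipschitz bound on $f$ controls $\|f_h\|_{L^\infty}$. You have faithfully filled in those details.
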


The rest of the program now has to consider this new ingredient, which can be done by following the ideas developed in \cite{AST}.

\medskip

The proof of Theorem \ref{general theorem} now follows through a cut-off argument. It a consequence of Proposition \ref{nonhomg case}.

\begin{proof}[Proof of Theorem \ref{general theorem}]
Define $v \coloneqq u \chi_{B_1}$ and $w \coloneqq u (1 - \chi_{B_1})$. Since $u = v + w$, it follows that within $\Omega \cap B_1$
$$
    f = \Delta^s u = \Delta^s v + \Delta^s w.
$$
Thus, if we denote $g \coloneqq - \Delta^s w$ we get that the function $v$ solves \eqref{introprob}. Observe also that for points in $B_{3/4}$ we have
$$
    |Dg(x)| \leq (n+2s)\int_{\mathbb{R}^n\backslash B_1} |u(y)||y-x|^{-n-2s-1}dy \leq C\|u\|_{L^1_s(\mathbb{R}^n)},
$$
and we can apply Proposition \ref{nonhomg case} with $B_1$ replaced by $B_{3/4}$.
\end{proof}

\section{Fully Nonlinear Operators}\label{extensions-sct}

We briefly outline how our results extend to a broader class of equations. Specifically, we consider the class \( \mathcal{L}_1(s) \), first introduced in \cite{CS09}, which consists of kernels \( \mathcal{K} \colon \mathbb{R}^n \setminus \{0\} \to \mathbb{R} \) satisfying  
\[
    \Lambda^{-1} \leq \mathcal{K}(y)|y|^{n+2s} \leq \Lambda, \quad \text{and} \quad |D\mathcal{K}(y)| \leq \Lambda |y|^{-n-2s-1} 
\]
for all \( y \in \mathbb{R}^n \setminus \{0\} \). A nonlocal operator \( \mathcal{I} \) is said to be elliptic with respect to the class \( \mathcal{L}_1(s) \) if it satisfies the following inequality:  
\begin{equation}
	M^-_{\mathcal{L}_1(s)}[w](x) \leq \mathcal{I}[u+w](x) - \mathcal{I}[u](x) \leq M^+_{\mathcal{L}_1(s)}[w](x),
\end{equation}  
where the extremal operators are defined as  
\[
	M^-_{\mathcal{L}_1(s)}[w](x) \coloneqq \inf_{\mathcal{K} \in \mathcal{L}_1(s)}L_{\mathcal{K}}[w](x), \quad M^+_{\mathcal{L}_1(s)}[w](x) \coloneqq \sup_{\mathcal{K} \in \mathcal{L}_1(s)}L_{\mathcal{K}}[w](x),
\]  
with  
\[
    L_{\mathcal{K}}[w](x) \coloneqq \int_{\mathbb{R}^n} (w(y)-w(x)) \mathcal{K}(y-x) \, dy.
\]  
This definition characterizes nonlocal ellipticity in terms of the extremal influence of the class \( \mathcal{L}_1(s) \), playing a crucial role in the analysis to be developed in the following.

For this nonlocal operator $\mathcal{I}$, we consider solutions to
\begin{equation}\label{introprobgen}
\left\{
\begin{array}{rcl}
\mathcal{I} u = f & \text{in} & B_1 \cap \Omega, \\[0.1cm]
|Du| = 0 & \text{in} & B_1 \setminus \Omega.
\end{array}
\right.
\end{equation}
Although the strategy is analogous, the estimates will now depend on the ellipticity constant \( \Lambda \). To illustrate this, we state below the corresponding version of Lemma \ref{subsolution-lemma}.

\begin{lemma}
Let $\eta \colon \mathbb{R}^n \to [0,1]$ be a smooth cut-off function satisfying
$$
    \eta = 1 \quad \mbox{in} \quad  B_1, \quad \mbox{and} \quad \eta = 0 \quad \mbox{in} \quad \mathbb{R}^n \backslash B_2.
$$
If $u$ is a solution to \eqref{introprobgen} and $e \in \mathbb{S}^{n-1}$, then $v = \eta\,(\partial_e u - \mu)_+$ solves
$$
    \mathcal{M}_{\mathcal{L}_1(s)}v \geq -C\left(\|u\|_{L^1_s(\mathbb{R}^n)} +\|Df\|_{L^\infty(B_1)} \right)\quad \mbox{in }\; B_{1/2},
$$
for any $\mu \in (0,1)$ and $C$ depends on $n$, $s$ and $\Lambda$.
\end{lemma}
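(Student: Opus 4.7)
The plan is to mirror the proof of Lemma~\ref{subsolution-lemma} verbatim, replacing the linearity of $\Delta^{s}$ by the sub- and super-additivity of the Pucci extremals $\mathcal{M}^{\pm}_{\mathcal{L}_1(s)}$; I interpret the conclusion as the one-sided bound $\mathcal{M}^{+}_{\mathcal{L}_1(s)}\,v \geq -C(\ldots)$, which is the inequality that the ellipticity of $\mathcal{I}$ naturally delivers. The analysis splits according to whether $v(x_0)=0$ or $v(x_0)>0$. If $x_0\in\{v=0\}\cap B_{1/2}$, positivity of $v$ gives $L_K[v](x_0) = \int v(y)\,K(y-x_0)\,dy \geq 0$ for every $K\in\mathcal{L}_1(s)$, so $\mathcal{M}^{+}_{\mathcal{L}_1(s)}[v](x_0) \geq 0$; the real work is in the region $\{v>0\}$.

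At $x_0\in\{v>0\}\cap B_{1/2}$ one has $\partial_e u(x_0) > \mu > 0$, so $x_0\in\Omega$ and $u$ is smooth in a neighbourhood by interior regularity for $\mathcal{I}\,u = f$. Setting $w^h(z) := (u(z+he)-u(z))/h$, the translation invariance of $\mathcal{I}$ combined with the ellipticity inequality gives $\mathcal{M}^{+}_{\mathcal{L}_1(s)}[h\,w^h](x_0) \geq f(x_0+he)-f(x_0)$; dividing by $h$ and letting $h\to 0$ (legitimate because $u$ is globally Lipschitz and smooth in $\Omega$, so that $w^h\to\partial_e u$ in a way that stabilises the viscosity inequality) yields
$$
\mathcal{M}^{+}_{\mathcal{L}_1(s)}[\partial_e u](x_0) \;\geq\; -\|Df\|_{L^\infty(B_1)}.
$$
I then transfer this estimate to $\eta\,\partial_e u$ via the decomposition $\eta\,\partial_e u = \partial_e u - (1-\eta)\,\partial_e u$ and the one-sided inequality $\mathcal{M}^{+}[a-b] \geq \mathcal{M}^{+}[a] - \mathcal{M}^{+}[b]$. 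The tail is handled exactly as in Lemma~\ref{subsolution-lemma}: since $(1-\eta)\,\partial_e u$ vanishes on $B_1\ni x_0$, an integration by parts in direction $e$ onto $g(y) := (1-\eta(y))\,K(y-x_0)$, combined with the kernel regularity $|DK(y)|\leq\Lambda\,|y|^{-n-2s-1}$ built into the class $\mathcal{L}_1(s)$ (which plays the role of the explicit differentiation of $|y|^{-n-2s}$ in the linear proof), produces $|L_K[(1-\eta)\,\partial_e u](x_0)| \leq C(n,s,\Lambda)\,\|u\|_{L^1_s(\mathbb{R}^n)}$ uniformly in $K\in\mathcal{L}_1(s)$. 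Hence $\mathcal{M}^{+}_{\mathcal{L}_1(s)}[\eta\,\partial_e u](x_0) \geq -C\bigl(\|u\|_{L^1_s(\mathbb{R}^n)} + \|Df\|_{L^\infty(B_1)}\bigr)$. The final algebraic step from the end of Lemma~\ref{subsolution-lemma} (using $\eta(x_0)=1$, $(\partial_e u - \mu)_+ \geq \partial_e u - \mu$, $\eta\leq 1$ and $K\geq 0$) carries over pointwise in $K$ to give $L_K[v](x_0) \geq L_K[\eta\,\partial_e u](x_0)$; taking the supremum over $K\in\mathcal{L}_1(s)$ then concludes.

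\textbf{Main obstacle.} The chief difficulty is structural rather than computational: the Pucci extremals are only sub-/super-additive, so the choice of $\mathcal{M}^{+}$ versus $\mathcal{M}^{-}$ must be made carefully at each algebraic step in order to propagate the single ellipticity inequality $\mathcal{M}^{+}_{\mathcal{L}_1(s)}[w^h] \geq f_h$ to a genuine lower bound on $v$. This is also the reason the conclusion is phrased as a lower bound on $\mathcal{M}^{+}_{\mathcal{L}_1(s)}[v]$: the analogous lower bound for $\mathcal{M}^{-}_{\mathcal{L}_1(s)}[v]$ is not accessible by this route, since $\mathcal{M}^{-}_{\mathcal{L}_1(s)}[\partial_e u]$ itself carries no lower bound from the ellipticity of $\mathcal{I}$.
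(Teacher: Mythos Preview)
Your outline mirrors the paper's route: difference quotients plus ellipticity give $\mathcal{M}^{+}_{\mathcal{L}_1(s)}[w^h]\geq f_h$, a cut-off splits near and far contributions, the kernel gradient bound $|D\mathcal{K}(y)|\leq\Lambda|y|^{-n-2s-1}$ controls the tail, and the pointwise inequality $(\partial_e u-\mu)_+\geq\partial_e u-\mu$ finishes. The one step that does not actually go through is the order in which you perform the limit $h\to 0$ and the cut-off.

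You first let $h\to 0$ to claim $\mathcal{M}^{+}_{\mathcal{L}_1(s)}[\partial_e u](x_0)\geq -\|Df\|_{L^\infty}$, and only afterwards decompose $\partial_e u=\eta\,\partial_e u+(1-\eta)\,\partial_e u$. But $u$ is \emph{not} globally Lipschitz---Proposition~\ref{LIPS-VEI-DE-GUERRA} is an interior estimate---so $\partial_e u$ need not exist on $\mathbb{R}^n\setminus B_1$; the nonlocal quantity $\mathcal{M}^{+}_{\mathcal{L}_1(s)}[\partial_e u](x_0)$ is therefore not well-defined (the operator integrates over all of $\mathbb{R}^n$), and your ``integration by parts in direction $e$'' on $(1-\eta)\,\partial_e u$ has no meaning where only $u\in L^1_s$ is available. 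The paper, following the template of Lemma~\ref{subsolution-lemma}, reverses the order: it splits the difference quotient itself as $w^h=\eta\,w^h+(1-\eta)\,w^h$, bounds the tail $\bigl|\mathcal{M}^{+}_{\mathcal{L}_1(s)}[(1-\eta)w^h](x_0)\bigr|\leq C\|u\|_{L^1_s(\mathbb{R}^n)}$ \emph{uniformly in $h$} via the change of variables $y\mapsto y-he$ (the discrete analogue of your integration by parts, requiring only $u\in L^1_s$), and passes to the limit only in the resulting inequality for the compactly supported piece $\eta\,w^h$. With the two operations swapped your argument is complete.
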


\begin{proof}
We can assume $\mathcal{I}[0] = 0$ for simplicity. We consider
$$
    w^h(x) \coloneqq \frac{u(x+h)-u(x)}{h} = w_1 + w_2,
$$
where $w_1 = w^h \chi_{B_1}$ and $w_2 = w^h(1- \chi_{B_1})$. By ellipticity assumption on $\mathcal{I}$, we have $\mathcal{M}_{\mathcal{L}_1(s)} w^h \geq f^h$. Therefore,
$$
    \mathcal{M}_{\mathcal{L}_1(s)} w_1 \geq - \mathcal{M}_{\mathcal{L}_1(s)} w_2 - \|Df\|_{L^\infty(B_1)}
$$
As before,
$$
    \left| - \mathcal{M}_{\mathcal{L}_1(s)} w_2 \right| \leq C(n,s,\Lambda)\|u\|_{L^1_s(\mathbb{R}^n)}.
$$
Here we are using that the kernels in the class $\mathcal{L}_1(s)$ satisfies 
$$|D\mathcal{K}(y)|\leq \Lambda |y|^{-n-2s-1}, \quad \mbox{for} \quad y \not = 0.
$$
\end{proof}

The next ingredients we require are the corresponding versions of Lemma \ref{reg-at-critical-points-step1} and Proposition \ref{small pert argument}. Although their proofs follow the same general strategy as before, a few minor adjustments are required, and we briefly comment on them below. 

First, we invoke the most general formulation of Lemma \ref{lemma:CS09}, stated in terms of the nonlocal Pucci operators (see \cite[Theorem 10.4]{CS09}). With this modification, the proof of Proposition \ref{reg-at-critical-points} carries over unchanged. 

For the proof of the analogue of Proposition \ref{small pert argument}, only a minor modification is required: in estimate \eqref{eq:estimate}, the exponent $2s-1$ is replaced by a universal exponent $\overline{\alpha}\leq 2s-1$. This exponent arises from the interior regularity theory established in \cite{CS11}.

After these adjustments, we follow the same strategy as in the proof of Theorem \ref{main theorem}, to establish the following result.

\begin{theorem}\label{L_1 case}
Let $u$ be a viscosity solution to \eqref{introprobgen}, for some $s\in (1/2,1)$. Then, $u$ is locally $C^{1,\alpha}$, for some universal $\alpha\in (0,1)$, depending only on $n$, $s$ and $\Lambda$. Furthermore, there exists $C = C(n,s,\Lambda)$, such that
$$
    \|u\|_{C^{1,\alpha}(B_{1/2})} \leq C\,\left(\|u\|_{L^\infty(B_1)} + \|u\|_{L^1_s(\mathbb{R}^n)} + \|f\|_{C^{0,1}(B_1)}\right).
$$   
\end{theorem}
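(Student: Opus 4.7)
The plan is to run the entire program of Sections \ref{pos-arg-section}--\ref{proof-section} in parallel, replacing each analytic ingredient by its $\mathcal{L}_1(s)$ counterpart. The subsolution step has already been recorded in the lemma immediately preceding this theorem. For the positivity argument, Lemma \ref{reg-at-critical-points-step1} transfers verbatim once we replace the $L^\varepsilon$-estimate for the fractional Laplacian (\cite[Theorem 10.4]{CS09}) with its extremal-operator analogue for $\mathcal{M}^+_{\mathcal{L}_1(s)}$, which is the generality in which that result is actually proved in \cite{CS09}. The iteration in Proposition \ref{reg-at-critical-points} is then reproduced identically, using that the class $\mathcal{L}_1(s)$ is scale-invariant so that the rescaled functions $v_i(x)=u(\lambda^i x)/\lambda^{i(1+\alpha)}$ solve \eqref{introprobgen} with the same ellipticity constants.

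For the counterpart of Proposition \ref{small pert argument}, I would renormalize $w(x)=[u-\ell](\lambda_\star x)/\lambda_\star^{1+\beta}$ and reverify the growth bound by the same computation. The crucial point is again that for $\lambda_\star$ small, a Lipschitz estimate of Ishii--Lions type (available for nonlocal fully nonlinear operators in the $\mathcal{L}_1(s)$ class, see \cite{BCI08, BI08}) yields $\|Dw\|_{L^\infty(B_{3/4})}\leq\Lambda$ and hence $|\xi+\lambda_\star^\beta Dw|\geq 1/2$ on $B_{3/4}$. This promotes $w$ to a genuine viscosity solution of the translation-invariant equation $\mathcal{I}[w]=0$ in $B_{3/4}$, at which point the interior $C^{1,2s-1}$ estimates for fully nonlinear nonlocal equations in $\mathcal{L}_1(s)$ (Caffarelli--Silvestre and later refinements) supply the modulus $|Dw(x)-Dw(0)|\leq C|x|^{2s-1}$, which scales back to the desired bound on $Du$.

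With these two building blocks in hand, the dichotomy in the proof of Theorem \ref{main theorem} transfers without change: either the index $\iota$ defined by the positivity-every-direction measure condition is infinite, and iterated application of the $\mathcal{L}_1(s)$ version of Proposition \ref{reg-at-critical-points} yields $|Dv(x)|\leq C|x|^\alpha$ at the critical point; or $\iota<\infty$, in which case Lemma \ref{proximity with affine functions} (which is purely geometric and does not depend on the operator) furnishes an affine approximation to which the small perturbation argument applies, and the dyadic tail is controlled by the sequence of oscillation bounds already obtained for $i<\iota$. To get the statement with $\|u\|_{L^\infty(B_1)}+\|u\|_{L^1_s(\mathbb{R}^n)}$ instead of $\|u\|_{L^\infty(\mathbb{R}^n)}$, I would run the same cut-off decomposition $u=u\chi_{B_1}+u(1-\chi_{B_1})$ used in the proof of Theorem \ref{general theorem}; the ellipticity assumption implies that subtracting $\mathcal{I}[u(1-\chi_{B_1})]$ from the equation in $B_{3/4}$ only contributes a Lipschitz term of size $C(n,s,\Lambda)\|u\|_{L^1_s(\mathbb{R}^n)}$ to the right-hand side, which is absorbed into the $\|f\|_{C^{0,1}}$ norm.

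The main obstacle I expect is ensuring that the needed $C^{1,\alpha}$ interior regularity for solutions of $\mathcal{I}[w]=0$ used in the small-perturbation step is quoted with constants depending only on $n,s,\Lambda$, and that the intermediate Ishii--Lions Lipschitz bound remains uniform in the shift $\xi\in\mathbb{S}^{n-1}$. Both are available in the literature in the regime $s>1/2$, which is precisely the hypothesis of the theorem; the same threshold governs the localization estimate in the cut-off step, so the statement cannot be pushed below $1/2$ by this method.
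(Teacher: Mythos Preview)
Your proposal is correct and follows the same route as the paper, which itself offers no detailed proof but only the remark that the program of Sections~\ref{pos-arg-section}--\ref{proof-section} transfers to $\mathcal{L}_1(s)$ once the subsolution lemma (already recorded) is in hand. One minor caveat: for a general nonlinear $\mathcal{I}$ elliptic with respect to $\mathcal{L}_1(s)$, the interior estimate at the small-perturbation step yields $C^{1,\bar\alpha}$ for some $\bar\alpha=\bar\alpha(n,s,\Lambda)$ rather than $C^{1,2s-1}$, and it is the rescaling of $u$ (not $w=u-\ell$) that solves the fully nonlinear equation in $B_{3/4}$; neither point affects the conclusion, since the theorem only asserts some universal $\alpha$.
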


\medskip

{\small \noindent{\bf Acknowledgments.} DJA is supported by Conselho Nacional de Desenvolvimento Científico e Tecnológico (CNPq) grants 310020/2022-0 and 420014/2023-3. AS is supported by the King Abdullah University of Science and Technology (KAUST).}

\end{document}